\newcommand{\rt}{\rightarrow}
\newcommand{\lrt}{\longrightarrow}
\newcommand{\st}{\stackrel}
\newcommand{\La}{\Lambda}
\newcommand{\C}{\mathbb{C} }
\newcommand{\D}{\mathbb{D} }
\newcommand{\K}{\mathbb{K} }
\newcommand{\Z}{\mathbb{Z} }
\newcommand{\CA}{\mathcal{A} }
\newcommand{\CC}{\mathcal{C} }
\newcommand{\CF}{\mathcal{F} }
\newcommand{\CI}{\mathcal{I} }
\newcommand{\CP}{\mathcal{P} }
\newcommand{\CS}{\mathcal{S} }
\newcommand{\CT}{\mathcal{T} }
\newcommand{\CX}{\mathcal{X} }
\newcommand{\CY}{\mathcal{Y} }
\newcommand{\CW}{\mathcal{W}}
\newcommand{\CV}{\mathcal{V}}
\newcommand{\CU}{\mathcal{U}}
\newcommand{\BZ}{\mathbf{Z}}
\newcommand{\CB}{\mathcal{B} }
\newcommand{\CCF}{{\rm Cot}\mbox{-} \mathcal{F}}
\newcommand{\X}{\mathbf{X}}
\newcommand{\Y}{\mathbf{Y}}
\newcommand{\PP}{\mathbf{P}}
\newcommand{\Mod}{{\rm{Mod\mbox{-}}}}
\newcommand{\Modd}{{\rm{Mod}}_0\mbox{-}}
\newcommand{\mmod}{{\rm{{mod\mbox{-}}}}}
\newcommand{\Inj}{{\rm{Inj}\mbox{-}}}
\newcommand{\Prj}{{\rm{Prj}\mbox{-}}}
\newcommand{\prj}{{\rm{prj}\mbox{-}}}
\newcommand{\inj}{{\rm{inj}\mbox{-}}}
\newcommand{\im}{{\rm{Im}}}
\newcommand{\op}{{\rm{op}}}
\newcommand{\ac}{{\rm{ac}}}
\newcommand{\bb} {{\rm{b}}}
\newcommand{\Coker}{{\rm{Coker}}}
\newcommand{\Ker}{{\rm{Ker}}}
\newcommand{\pur}{{\rm pac}}
\newcommand{\Hom}{{\rm{Hom}}}
\newcommand{\Ext}{{\rm{Ext}}}
\newcommand{\End}{{\rm{End}}}
\theoremstyle{plain}
\newtheorem{theorem}{Theorem}[section]
\newtheorem{corollary}[theorem]{Corollary}
\newtheorem{lemma}[theorem]{Lemma}
\newtheorem{proposition}[theorem]{Proposition}
\theoremstyle{definition}
\newtheorem{definition}[theorem]{Definition}
\newtheorem{remark}[theorem]{Remark}
\theoremstyle{plain}
\newtheorem{stheorem}{Theorem}[subsection]
\newtheorem{scorollary}[stheorem]{Corollary}
\newtheorem{slemma}[stheorem]{Lemma}
\newtheorem{sproposition}[stheorem]{Proposition}
\theoremstyle{definition}
\newtheorem{sremark}[stheorem]{Remark}
\numberwithin{equation}{section}
\begin{document}

\title[Auslander's Formula: Variations and Applications]{Auslander's Formula: Variations and Applications}

\author[Asadollahi, Asadollahi, Hafezi, Vahed]{Javad Asadollahi, Najmeh Asadollahi, Rasool Hafezi and Razieh Vahed}

\address{Department of Mathematics, University of Isfahan, P.O.Box: 81746-73441, Isfahan, Iran and School of Mathematics, Institute for Research in Fundamental Sciences (IPM), P.O.Box: 19395-5746, Tehran, Iran }
\email{asadollahi@ipm.ir, asadollahi@sci.ui.ac.ir}

\address{Department of Mathematics, University of Isfahan, P.O.Box: 81746-73441, Isfahan, Iran}
\email{n.asadollahi@sci.ui.ac.ir}

\address{School of Mathematics, Institute for Research in Fundamental Sciences (IPM), P.O.Box: 19395-5746, Tehran, Iran }
\email{hafezi@ipm.ir}

\address{Department of Mathematics, Khansar Faculty of Mathematics and Computer Science, Khansar, Iran and School of Mathematics, Institute for Research in Fundamental Sciences (IPM), P.O.Box: 19395-5746, Tehran, Iran }
\email{vahed@ipm.ir}

\subjclass[2010]{18E30, 16E35, 18E15}

\keywords{Auslander's formula, functor category, recollement, derived category.}

\thanks{This research was in part supported by a grant from IPM (No: 94130216)}

\begin{abstract}
According to the Auslander's formula one way of studying an abelian category $\CC$ is to study $\mmod \CC$, that has nicer homological properties than $\CC$, and then translate the results back to $\CC$. Recently Krause gave a derived version of this formula and thus renewed the subject. This paper contains a detailed study of various versions of Auslander formula including the versions for all modules and for unbounded derived categories. We apply them to include some results concerning recollements of triangulated categories.
\end{abstract}

\maketitle


\section{Introduction}
Let $\CC$ be an abelian category. A contravariant functor $F$ from $\CC$ to the category of abelian groups $\mathcal{A}b$ is called finitely presented, or coherent \cite{As1}, if there exists an exact sequence
$$ \Hom_{\CC}(-, X) \lrt \Hom_{\CC}(-,Y) \lrt F \lrt 0$$
of functors.
Let $\mmod \CC$ denote the category of all coherent functors. The systematic study of $\mmod \CC$ is initiated by Auslander \cite{As1}. He, not only showed that $\mmod \CC$ is an abelian category of global dimension less than or equal to two but also provided a nice connection between $\mmod \CC$ and $\CC$. This connection, which is known as Auslander formula \cite{L,K}, suggests that one way of studying $\CC$ is to study $\mmod \CC$, that has nicer homological properties than $\CC$, and then translate the results back to $\CC$. In particular, if we let $\CC$ to be $\mmod \La$, where $\La$ is an artin algebra, Auslander formula translates to the equivalence
$$\frac{\mmod (\mmod \La)}{\{F\mid F(\La)=0\}} \simeq \mmod \La$$
of abelian categories. As it is mentioned in \cite{L}, `a considerable part of Auslander's work on the representation theory of finite dimensional, or more general artin, algebras can be connected to this formula'.

Recently, Krause \cite{K} established a derived version of Auslander's formula, showing that $\D^{\bb}(\CC)$ is equivalent to a quotient of $\D^{\bb}(\mmod\CC)$. Also he gave a derived version of this formula for complexes of injective objects \cite[Sec. 4]{K}.

This work can be considered as a continuation of \cite{K}. It contains a detailed study of various versions of Auslander's formula, including the versions for all modules and for unbounded derived categories. These will have some applications, in particular, provide two expressions of $\D(\Mod R)$ as Verdier quotients. For the proof, we follow similar argument as in the proof of the classical case by Auslander, step by step. Let us be more precise on the structure of the paper.

Section \ref{Section 2} is the preliminary section and contains a collection of known facts that we need throughout the paper. Section \ref{Section 3} is devoted to Auslander's formula and its variations, from large Mod to different derived versions, i.e. unbounded, bounded above and bounded, both for contravariant and also covariant functors. One of the key points is a fundamental four terms exact sequence, similar to what Auslander has proved to exist \cite[pp. 203-204]{As1}. Here we use special flat resolutions instead of projective resolutions in Auslander's work, to prove that such a sequence exists in our context, see Proposition \ref{Aus-ExtSeq}. Let $R$ be a right coherent ring. We extend the existence of the sequence to complexes of functors over $\Mod(\mmod R)$ and apply it to present an unbounded derived version of Auslander's formula for $\Mod (\mmod R)$, i.e.
$$\frac{\D(\Mod (\mmod R))}{\D_0(\Mod (\mmod R))}\simeq \D(\Mod R),$$
where $\D_0(\Mod (\mmod R))$ is the thick subcategory of $\D(\Mod (\mmod R))$ consisting of all complexes $\X$ such that $\X(R)$ is an acyclic complex. This equivalence restricts to triangle equivalences
$$\frac{\D^*(\Mod (\mmod R))}{\D^*_0(\Mod (\mmod R))}\simeq \D^*(\Mod R),$$
where $* \in \{-, \bb\}$ and $\D^*_0(\Mod (\mmod R))= \D_0(\Mod (\mmod R)) \bigcap \D^*(\Mod (\mmod R))$. The argument works also to reprove Krause's result as well as its extension to unbounded derived categories, see Proposition \ref{Ext-Krause}. These are done in Subsection \ref{Contravariant functor}. A version of Auslander's formula for $\Mod(\mmod R)^{\op}$, the category of covariant functors from $\mmod R$ to $\CA b$ can be found in Subsection \ref{Covariant functors}.

In Section \ref{Section 4}, we apply our results to present two recollements and hence two descriptions of $\D(\Mod R)$ as the Verdier quotients of homotopy categories. To this end, we consider the pure-exact structure on the category $\Mod R$. The injective objects with respect to the pure-exact structure are called pure-injective $R$-modules. We denote the class of pure-injective $R$-modules  by ${\rm P}\Inj R$. Dually, we have the class ${\rm P}\Prj R$  of all pure-projective $R$-modules.
We show that the homotopy category $\K({\rm P}\Inj R)$ glues the homotopy categories $\K_{\ac}({\rm P}\Inj R)$ of all acyclic complexes over pure-injective $R$-modules and the derived category $\D(\Mod R)$, i.e. there is a recollement
\[\xymatrix@C=0.5cm{ \K_{\ac}({\rm P}\Inj R) \ar[rrr]^{ }   &&& \K({\rm P}\Inj R) \ar[rrr]^{} \ar@/^1.5pc/[lll]_{ } \ar@/_1.5pc/[lll]_{} &&& \D(\Mod R).\ar@/^1.5pc/[lll]_{} \ar@/_1.5pc/[lll]_{} }\]
Moreover, we show that similar recollement exists for $\K({\rm P}\Prj R)$.
There are some interesting consequences, among them an equivalence $$\K_{\ac}({\rm P} \Inj R) \simeq \K_{\ac}({\rm P}\Prj R),$$ of triangulated categories, where $\K_{\ac}({\rm P}\Inj R)$, resp. $\K_{\ac}({\rm P}\Prj R)$, denotes the homotopy category of all acyclic complexes of pure-injective, resp. pure-projective, $R$-modules.

Throughout the paper $R$ denotes a right coherent ring, $\Mod R$ denotes the category of all right $R$-modules and  $\mmod R$ denotes the full subcategory of $\Mod R$ consisting of all finitely presented modules.  \\

\section{Preliminaries}\label{Section 2}
In this section we collect some facts, that are needed throughout the paper.

\s Let $\CA$ be an abelian category. We denote by $\C(\CA)$ the category of all complexes over $\CA$ and by $\K(\CA)$ the homotopy category of $\CA$. Moreover, $\K^-(\CA)$, resp. $\K^{\bb}(\CA)$, denote the full subcategory of $\K(\CA)$ consisting of all bounded above, resp. bounded, complexes. The derived category of $\CA$ will be denoted by $\D(\CA)$. Moreover, $\D^{-}(\CA)$, resp. $\D^{\bb}(\CA)$, denotes the full subcategory of $\D(\CA)$ consisting of all homologically bounded above, resp. homologically bounded, complexes.

Let $\Prj\CA$, resp. $\Inj\CA$, denote the full subcategory of $\CA$ formed by all  projective, resp. all injective, objects. In case $\CA= \Mod R$, we abbreviate  $\Prj (\Mod R)$ to $\Prj R$ and  set $\prj R = \Prj R \bigcap \mmod R$. Similarly $\Inj R$ and $\inj R$ will be defined.\\

\noindent {\bf Functor categories.}
Let $\CC$ be an essentially small abelian category. The additive contravariant functors from $\CC$ to the category of abelian groups $\CA b$ together with the natural transformations between them form a category which is known as the functor category and is denoted either by $(\CC^{\op},\CA b)$ or $\Mod \CC$.
The category $\Mod \CC$, sometimes, is called the category of modules over $\CC$. It is known that $\Mod\CC$ is an abelian category. Similarly, all covariant functors and their natural transformations form an abelian category which is denoted by $(\CC, \CA b)$, or sometimes by $\Mod\CC^{\op}$.

It follows from Yoneda lemma that for every object $C \in \CC$, the representable functor $\Hom_{\CC}(-,C)$ is a projective object of $\Mod \CC$. Also, for every functor $F$ in $\Mod\CC$ there is an epimorphism  $\coprod_i \Hom_{\CC}(-,C_i) \lrt F \lrt 0$, where $C_i$ runs through all isomorphism classes of objects of $\CC$. Hence, the abelian category $\Mod \CC$ has enough projective objects.

A $\CC$-module $F$ is called finitely presented if there is the following  exact sequence
$$ \Hom_{\CC}(-,C_1) \lrt \Hom_{\CC}(-, C_0) \lrt F \lrt 0$$
of $\CC$-modules, where  $C_1 , C_0 \in \CC$. The category of all finitely presented $\CC$-modules is an abelian category \cite[Chapter III, \S 2]{Au2} and will be denoted by $\mmod \CC$.\\

\s \label{ProjObj}
Recall that a short exact sequence
$$0 \lrt M' \lrt M \lrt M''\lrt 0$$ of $R$-modules is called pure-exact if for every $N \in \mmod R$, the induced sequence $$ 0 \lrt \Hom_R(N, M') \lrt \Hom_R(N,M) \lrt \Hom_R(N,M'') \lrt 0$$
is exact. Consider the pure-exact structure on the category $\Mod R$.
An $R$-module $M$ is called pure-projective, if it is a projective object with respect to this exact structure. Warfield \cite{Wa} showed that pure-projective modules are precisely the direct summands of direct sums of finitely presented modules, see also \cite[33.6]{Wi}. We denote by ${\rm P}\Prj R$ the full subcategory of $\Mod R$ consisting of all pure-projective modules. The subcategory of $\Mod R$ consisting of pure injective modules, ${\rm P}\Inj R$, defines dually.
It is known that a functor $P$ in $\Mod (\mmod R)$ is projective if and only if $P \cong \Hom_R(-,M)$ for some pure-projective $R$-module $M$, see e.g. \cite[Theorem B.10]{JL}.\\

\noindent {\bf Recollements of abelian categories.}
A subcategory $\CC$ of an abelian category $\CA$ is called a Serre subcategory, if for every short exact sequence $0 \rt C_1 \rt C_2 \rt C_3 \rt 0$  in $\CA$, $C_2 \in \CC$ if and only if $C_1, C_3 \in \CC$.
For a Serre subcategory $\CC$ of $\CA$, Gabriel \cite{Gab} constructed  an abelian category $\CA/\CC$ with the same objects as in $\CA$ and  morphism sets
\[ \Hom_{\CA/\CC}(X, Y) = \underset{ X',Y'}{\underrightarrow{\rm lim}} \Hom_{\CA}(X', Y/Y'),\]
where $X'$, resp. $Y'$, runs through all subobjects of $X$, resp. $Y$, such that $X/X'$, resp. $Y'$, lies in $\CC$.
Assigned to a Serre subcategory $\CC$ of $\CA$, there is an exact and dense quotient functor
$Q: \CA \lrt \CA/\CC$. A Serre subcategory $\CC$ is called localizing, resp. colocalizing, if $Q$ possesses  a right, resp. left, adjoint.

Let $\CB$ be another abelian category and $F: \CA \lrt \CB$ be an additive functor. We set
$ \im F= \{ B \in \CB \mid B \cong F(A), \text{ for some } A \in \CA\}$
and
$\Ker F= \{ A \in \CA \mid F(A)=0\}.$

\begin{definition}\label{Def-Rec}
Let $\CA'$, $\CA$ and $\CA''$ be abelian categories. A recollement \cite{BBD} of $\CA$ with respect to $\CA'$ and $\CA''$ is a diagram
\[\xymatrix{\CA'\ar[rr]^{i_*=i_!}  && \CA \ar[rr]^{j^*=j^!} \ar@/^1pc/[ll]_{i^!} \ar@/_1pc/[ll]_{i^*} && \CA'' \ar@/^1pc/[ll]_{j_*} \ar@/_1pc/[ll]_{j_!} }\]
of additive functors satisfying the following conditions:
\begin{itemize}
\item[$(i)$] $(i^*,i_*)$, $(i_!,i^!)$, $(j_!, j^!)$ and $(j^*,j_*)$ are adjoint pairs.
\item[$(ii)$] $i_*$, $j_*$ and $j_!$ are fully faithful.
\item[$(iii)$] $\im i_*= \Ker j^*$.
\end{itemize}
\end{definition}

A sequence of abelian categories is called a localization sequence if the lower two rows of a recollement exist and the functors appearing in these two rows, i.e. $i_*, i^!, j^!$ and $j_*$, satisfy all the conditions in the definition above which involve only these functors. Similarly, one can define a colocalization sequence of abelian categories via the upper two rows.\\

For the proof of the following facts see e.g. \cite{FP} and \cite{Gab}.

\begin{remark}\label{Properties}  Consider the recollement of Definition \ref{Def-Rec}. Then the functors $i_*$ and $j^*$ are exacts and $i_*$ induces an equivalence between $\CA'$ and the Serre subcategory $\im i_* = \Ker j^*$ of $\CA$. In particular, $\CA'$ can be considered as a Serre subcategory of $\CA$. Furthermore, since the exact functor $j^*$ has a fully faithful right, resp. left, adjoint, $\CA'$ is a localizing, resp, colocalizing, subcategory of $\CA$  and there exists an equivalence $\CA'' \simeq \CA/\CA'$.
\end{remark}

\noindent {\bf Recollements of triangulated categories and stable $t$-structures.} Let $\CT$, $\CT'$ and $\CT''$ be triangulated categories.

\begin{definition}
A recollement of  $\CT$  relative to  $\CT'$ and $\CT''$ is defined by six triangulated functors as follows
\[\xymatrix{\CT'\ar[rr]^{i_*=i_!}  && \CT \ar[rr]^{j^*=j^!} \ar@/^1pc/[ll]_{i^!} \ar@/_1pc/[ll]_{i^*} && \CT'' \ar@/^1pc/[ll]_{j_*} \ar@/_1pc/[ll]_{j_!} }\]
satisfying the following conditions:
\begin{itemize}
\item[$(i)$] $(i^*,i_*)$, $(i_!,i^!)$, $(j_!, j^!)$ and $(j^*,j_*)$ are adjoint pairs.
\item[$(ii)$] $i^!j_*=0$, and hence $j^!i_!=0$ and $i^*j_!=0$.
\item[$(iii)$] $i_*$, $j_*$ and $j_!$ are fully faithful.
\item[$(iv)$] for any object $T \in \CT$, there exist the following triangles
\[i_!i^!(T) \rt  T \rt j_*j^*(T) \rightsquigarrow \ \ \ \text{and} \ \ \ j_!j^!(T) \rt T \rt i_*i^*(T) \rightsquigarrow\]
in $\CT$.
\end{itemize}
\end{definition}

Similar to the case of abelian categories, one can define a localization and a colocalization sequence of triangulated categories.

\begin{definition}
A pair $(\CU, \CV)$ of full  subcategories of a triangulated category $\CT$ is called a stable $t$-structure in  $\CT$ if the following conditions are satisfied.
\begin{itemize}
\item[$(i)$] $\mathcal{U}=\Sigma \mathcal{U}$ and $\mathcal{V}= \Sigma \mathcal{V}$.
\item[$(ii)$] $\Hom_{\CT}(\mathcal{U}, \mathcal{V})=0$.
\item[$(iii)$] For each $X \in \CT$, there is a triangle $U \rt X \rt V \rightsquigarrow$ with $U \in \mathcal{U}$ and $V \in \mathcal{V}$.
\end{itemize}
\end{definition}

Following result establishes a close relation between recollements of triangulated categories and stable $t$-structures, see \cite[Proposition 2.6]{Mi}.

\begin{proposition} \label{Miyachi} Let $\CT$ be a triangulated category. Let $(\CU,\CV)$ and $(\CV,\CW)$ be stable $t$-structures in $\CT$. Then there is a recollement
\[\xymatrix@C-0.5pc@R-0.5pc{ \CV  \ar[rr]^{i_*}  && \CT \ar[rr]^{j^*} \ar@/^1pc/[ll]_{i^!} \ar@/_1pc/[ll]_{i^*} && \CT/\CV \ar@/^1pc/[ll]_{j_*} \ar@/_1pc/[ll]_{j_!} }\]
in which  $i_*: \CV \lrt \CT$ is an inclusion functor, $\im j_!=\CU$ and $\im j_* =\CW$.
\end{proposition}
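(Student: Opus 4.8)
The plan is to build the six functors explicitly from the two families of defining triangles and then check the recollement axioms almost mechanically; the only genuine content is recognizing each stable $t$-structure as a Bousfield (co)localization with respect to $\CV$. First I would record the orthogonality consequences of the hypotheses. From $(\CU,\CV)$ one has $\Hom_\CT(\CU,\CV)=0$ together with a functorial triangle $U_X \rt X \rt V_X \rightsquigarrow$ with $U_X\in\CU$, $V_X\in\CV$; applying the cohomological functors $\Hom_\CT(\CU,-)$ and $\Hom_\CT(-,\CV)$ to these triangles and using $\CU=\Sigma\CU$, $\CV=\Sigma\CV$ gives $\CU={}^{\perp}\CV$ and $\CV=\CU^{\perp}$, and shows that $X\mapsto V_X$ is left adjoint to the inclusion $i_*\colon\CV\hookrightarrow\CT$. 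Symmetrically, $(\CV,\CW)$ yields $\Hom_\CT(\CV,\CW)=0$, the identities $\CV={}^{\perp}\CW$ and $\CW=\CV^{\perp}$, a functorial triangle $V'_X\rt X\rt W_X\rightsquigarrow$ with $V'_X\in\CV$, $W_X\in\CW$, and a right adjoint $X\mapsto V'_X$ to $i_*$. I will call these (triangulated) truncation functors $i^*$ and $i^!$. In particular $\CU,\CV,\CW$ are thick, so the Verdier quotient $\CT/\CV$ and its quotient functor $Q$ exist, and I set $j^*=j^!=Q$.

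Next I would produce $j_!$ and $j_*$. The triangle $U_X\rt X\rt V_X$ exhibits $\CV$ as a colocalizing subcategory: since $V_X\in\CV$ becomes zero in $\CT/\CV$, the map $Q(U_X)\rt Q(X)$ is invertible, so $Q$ restricted to $\CU={}^\perp\CV$ is essentially surjective, while fullness and faithfulness of $Q|_{\CU}$ follow from $\Hom_\CT(\CU,\CV)=0$ by the standard calculus-of-fractions argument. Thus $Q|_{\CU}\colon\CU\xrightarrow{\ \sim\ }\CT/\CV$ is an equivalence, and I define $j_!$ as its quasi-inverse followed by the inclusion $\CU\hookrightarrow\CT$; the isomorphism $\Hom_\CT(U,X)\cong\Hom_{\CT/\CV}(QU,QX)$ coming from the same triangle shows $j_!$ is left adjoint to $Q$, and by construction $\im j_!=\CU$. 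Dually, the triangles $V'_X\rt X\rt W_X$ make $(\CV,\CW)$ a localization, $Q|_{\CW}\colon\CW\xrightarrow{\ \sim\ }\CT/\CV$ is an equivalence, and its quasi-inverse followed by $\CW\hookrightarrow\CT$ is a right adjoint $j_*$ with $\im j_*=\CW$. Both $j_!$ and $j_*$ are fully faithful, being equivalences composed with full inclusions.

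Finally I would verify the recollement axioms, all of which are now immediate. The four adjunctions $(i^*,i_*)$, $(i_*,i^!)$, $(j_!,j^*)$ and $(j^*,j_*)$ have just been arranged, and $i_*$, $j_*$, $j_!$ are fully faithful. For the vanishing conditions: an object of $\CW=\CV^{\perp}$ has trivial $\CV$-truncation, so $i^!j_*=0$, and dually $i^*j_!=0$, while $j^*i_*=0$ because $Q$ kills $\CV$. The gluing triangles are literally the defining triangles of the two $t$-structures: unravelling the constructions gives $i_*i^!(T)\rt T\rt j_*j^*(T)\rightsquigarrow$ equal to $V'_T\rt T\rt W_T\rightsquigarrow$, and $j_!j^!(T)\rt T\rt i_*i^*(T)\rightsquigarrow$ equal to $U_T\rt T\rt V_T\rightsquigarrow$. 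The hard part is the middle step, namely proving that $Q$ restricts to equivalences $\CU\simeq\CT/\CV\simeq\CW$ and that the essential images of its adjoints are exactly the orthogonal complements ${}^{\perp}\CV$ and $\CV^{\perp}$; this is precisely the Bousfield (co)localization statement and demands care with the calculus of fractions in $\CT/\CV$, whereas everything else reduces to formal bookkeeping with the adjunctions.
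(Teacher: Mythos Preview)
Your argument is correct and is essentially the standard Bousfield-localization proof of Miyachi's result. Note, however, that the paper does not supply its own proof of this proposition: it is quoted without proof from \cite[Proposition 2.6]{Mi}. The auxiliary facts you establish along the way---that the inclusion $i_*\colon\CV\hookrightarrow\CT$ acquires a left adjoint $i^*$ from the $t$-structure $(\CU,\CV)$ and a right adjoint $i^!$ from $(\CV,\CW)$, and that $Q$ restricts to equivalences $\CU\simeq\CT/\CV\simeq\CW$---are precisely the content of the companion result the paper records as Proposition~\ref{Miyachi2}, again cited from Miyachi without proof. So rather than differing from the paper's approach, your write-up supplies the details the paper chose to outsource; in effect you have reproved Proposition~\ref{Miyachi2} and then assembled the six functors from it, which is exactly how Miyachi's original argument proceeds.
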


We also need the following result of Miyachi.

\begin{proposition}\label{Miyachi2} \cite{Mi} Let $\CT$ be a triangulated category.
 Then the following statements hold true.
\begin{itemize}
\item [$(i)$]Let $(\CU,\CV)$ be a stable $t$-structure in $\CT$. Then the inclusion functor $i_*: \CU \lrt  \CT$, resp.\ $j_*: \CV \lrt \CT$, admits a right adjoint $i^!: \CT \lrt \CU$ , resp.\  a left adjoint $j^*: \CT \lrt \CV$. Moreover,  the functor $i^!$, resp.\ $j^*$, induces a triangle  equivalence $\CT/\CV \simeq \CU$, resp.\ $\CT/\CU \simeq \CV$.
\item [$(ii)$] If the inclusion functor $i_*: \CT' \lrt \CT$ has a left adjoint $i^*: \CT \lrt \CT'$, then $(\Ker i^*, \im i_*)$ is a stable $t$-structure in $\CT$.
\item [$(iii)$] If the inclusion functor $i_*: \CT' \lrt \CT$ has a right adjoint $i^*: \CT \lrt \CT'$, then $(\im i_*, \Ker i^*)$ is a stable $t$-structure in $\CT$.
\end{itemize}
\end{proposition}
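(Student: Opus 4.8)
The plan is to treat part $(i)$ as the core of the statement and to build the two adjoints directly from the decomposition triangles of the stable $t$-structure, deducing the Verdier equivalences afterwards; parts $(ii)$ and $(iii)$ then reduce to manufacturing the missing axiom $(iii)$ of a stable $t$-structure out of the unit and counit of the given adjunction.

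For $(i)$, I would define a candidate right adjoint to $i_*\colon \CU \lrt \CT$ on objects by sending $X$ to the term $U_X$ in the triangle $U_X \rt X \rt V_X \rightsquigarrow$ supplied by axiom $(iii)$, with $U_X \in \CU$ and $V_X \in \CV$. The decisive point is that this triangle is functorial in $X$. Given $f\colon X \rt Y$, the composite $U_X \rt X \rt Y \rt V_Y$ lies in $\Hom_{\CT}(\CU,\CV)=0$ by axiom $(ii)$, so $U_X \rt Y$ factors through $U_Y \rt Y$; uniqueness of the lift follows because any two lifts differ by a map $U_X \rt \Sigma^{-1}V_Y$, and $\Sigma^{-1}V_Y \in \CV$ by axiom $(i)$, so that difference again vanishes. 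This makes $i^!(X):=U_X$ a functor, and the same orthogonality shows that for $U \in \CU$ every map $U \rt X$ has vanishing composite into $V_X$ and hence lifts uniquely through $U_X$, giving the adjunction isomorphism $\Hom_{\CT}(i_*U,X)\cong \Hom_{\CU}(U,i^!X)$. The left adjoint $j^*$ to $j_*\colon\CV\lrt\CT$, defined by $X \mapsto V_X$, is constructed dually.

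To obtain the equivalence $\CT/\CV \simeq \CU$, I first observe that $i^!$ annihilates $\CV$: for $V \in \CV$ the triangle $0 \rt V \st{=}{\rt} V \rightsquigarrow$ shows $i^!V=0$. Hence $i^!$ factors through the Verdier quotient $Q\colon \CT \rt \CT/\CV$ as a functor $\overline{i^!}\colon \CT/\CV \rt \CU$, which I claim is quasi-inverse to $Q\circ i_*$. On one side, $i^! i_* U \cong U$ since the defining triangle of $U \in \CU$ can be taken to be $U \st{=}{\rt} U \rt 0 \rightsquigarrow$; on the other, applying $Q$ to $U_X \rt X \rt V_X \rightsquigarrow$ and using $V_X \in \CV$ gives $Q(U_X)\cong Q(X)$ naturally, so $Q\circ i_* \circ \overline{i^!} \cong \id$. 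The assertion for $j^*$ and $\CT/\CU \simeq \CV$ is dual. For $(ii)$ and $(iii)$, I set $\CU$ and $\CV$ as prescribed; axiom $(i)$ is immediate because $i_*$ and $i^*$ are triangulated, so $\im i_*$ and $\Ker i^*$ are closed under $\Sigma$, and axiom $(ii)$ follows from the adjunction isomorphism, which forces the relevant $\Hom$ group to vanish. The remaining axiom $(iii)$ comes from completing the unit $X \rt i_*i^*X$ to a triangle $U \rt X \rt i_*i^*X \rightsquigarrow$ in case $(ii)$, and the counit $i_*i^*X \rt X$ to $i_*i^*X \rt X \rt V \rightsquigarrow$ in case $(iii)$; in each case full faithfulness of $i_*$ makes the pertinent counit $i^*i_*\cong \id$ (resp.\ unit $\id \cong i^*i_*$) an isomorphism, and a triangle identity then forces $i^*$ applied to the unit (resp.\ counit) to be an isomorphism, so that applying $i^*$ to the constructed triangle collapses it and yields $i^*U=0$ (resp.\ $i^*V=0$).

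The main obstacle is the functoriality in part $(i)$: the entire argument hinges on upgrading the objectwise decomposition triangles to a genuine adjoint functor, and the delicate step is the simultaneous existence and uniqueness of lifts, both of which must be extracted purely from the orthogonality $\Hom_{\CT}(\CU,\CV)=0$ together with the stability $\CV=\Sigma\CV$. Once functoriality and the adjunction are secured, the passage to the Verdier quotient and the verifications in $(ii)$ and $(iii)$ are formal.
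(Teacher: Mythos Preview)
Your argument is correct and is the standard one. Note, however, that the paper does not supply its own proof of this proposition: it is quoted from Miyachi \cite{Mi} as a background fact, so there is no ``paper's proof'' to compare against beyond the citation. What you have written is essentially the argument one finds in Miyachi's original paper---building the adjoints from the functorial decomposition triangles via the orthogonality $\Hom_{\CT}(\CU,\CV)=0$ together with $\Sigma\CV=\CV$, and then reading off the Verdier equivalences; for $(ii)$ and $(iii)$, completing the unit or counit to a triangle and using full faithfulness of $i_*$ to see that the third term lies in $\Ker i^*$. Your identification of the delicate point (simultaneous existence and uniqueness of the lift, both forced by the vanishing of $\Hom_{\CT}(\CU,\Sigma^{n}\CV)$ for all $n$) is exactly right.
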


\noindent {\bf Cotorsion theory.} A pair $(\CX, \CY)$ of classes of objects of an abelian category $\CA$ is called a cotorsion theory if $\CX^\perp=\CY$ and $\CX = {}^\perp\CY$, where the left and right orthogonals are taken with respect to $\Ext^1_{\CA}$. So for example
\[\CX^\perp:=\{A \in \CA \ \mid \ \Ext^1_{\CA}(X,A)=0, \ {\rm{for \ all}} \ X \in \CX \}.\]
A cotorsion theory $(\CX, \CY)$ is called complete if for every $A \in \CA$ there exist exact sequences
$0 \rt Y \rt X \rt A \rt 0$ and $0 \rt A \rt Y' \rt X' \rt 0$, with $X, X'\in \CX$ and $Y, Y' \in \CY$.

A cotorsion theory $(\CX, \CY)$ is said to be cogenerated by a set if there is a set $\CS$ of objects of $\CA$ such that $\CS^\perp = \CY$. If a cotorsion theory is cogenerated by a set, then it is complete, see \cite[Theorem 10]{ET}.

\section{Deriving Auslander's formula}\label{Section 3}
In this section, we prove a version of Auslander's formula for $\Mod R$ and also provide an unbounded derived version of Auslander's formula for it, i.e. we prove that $\D(\Mod R)$ is equivalent to a quotient of $\D(\Mod(\mmod R))$. Note that in this section $R$ is always a right coherent ring.

\subsection{Contravariant functors}\label{Contravariant functor}
Recall that a functor $F \in \Mod (\mmod R)$ is called flat if every morphism $f: E \lrt F$, where $E$ is a finitely presented functor, factors through a representable functor $P$.  It is known that an object $F$ of $\Mod (\mmod R)$ is flat if and only if $F \cong \Hom_R(-, M)$, for some $R$-module $M$, see \cite[Theorem B.10]{JL}.
Let $\CF(\mmod R)$ denote the full subcategory of $\Mod (\mmod R)$ consisting of all flat objects.

It is known \cite[Theorem B.11]{JL} that the functor
$$U: \Mod R \lrt \Mod (\mmod R)$$
given by $U(M) = \Hom_R(-, M)|_{\mmod R}$ is fully faithful and induces an equivalence $\Mod R \simeq \CF(\mmod R)$ of categories.

For simplicity, throughout we write $(-, M)$, resp. $(-, f)$, instead of $\Hom_R(-, M)|_{\mmod R}$, resp. $\Hom_R(-, f)$, where $M$ is an $R$-module and $f$ is an $R$-homomorphism.

Let $F$ be an object in $\Mod (\mmod R)$. Consider the following projective presentation of $F$
$$ (-,M_1) \st{(-,d_1)}\lrt (-,M_0) \lrt F \lrt 0, $$
where $M_1$ and $M_0$ belong to ${\rm P}\Prj R$.
Let $M_2 \st{d_2} \lrt M_1$ be the kernel of $d_1$. Then there is a flat resolution
$$ 0 \lrt (-,M_2) \st{(-,d_2)} \lrt (-,M_1) \st{(-,d_1)} \lrt (-,M_0) \lrt F \lrt 0$$
of $F$. Hence, every functor in $\Mod(\mmod R)$ has a flat resolution of length at most 2.

By definition, for a functor $F \in \Mod (\mmod R)$, a flat precover is a morphism $\pi:(-, M) \lrt F$ such that $M \in \Mod R$ and $\Hom (- , (-,M)) \lrt \Hom (-, F)$ is surjective on $\CF(\mmod R)$. If, moreover, the kernel of $\pi$ belongs to $\CF(\mmod R)^\perp$, then $\pi: (-,M) \lrt F$ is called a special flat precover, where orthogonal is taken with respect to the functor $\Ext^1$.

\begin{sremark}
A flat resolution
$$ 0 \lrt (-,M_2) \st{(-,d_2)} \lrt (-,M_1) \st{(-,d_1)} \lrt (-,M_0) \st{\varepsilon}\lrt F \lrt 0$$
of $F$ is called special, if both morphisms $(-,M_0) \st{\varepsilon}\lrt F$ and $(-,M_1) \st{(-,d_1)} \lrt \Ker \varepsilon$ are special flat precovers. It is shown by Herzog \cite[Proposition 7]{H} that every functor in $\Mod (\mmod R)$ admits a special flat resolution of length at most 2.
\end{sremark}

To prepare the ground for our main result, we follow, step by step, Auslander's argument in Sections 2 and 3 of \cite{As1}. Since the techniques are similar, we just explain the outlines. The details then are straightforward and can be found in \cite{As1}.\\

Consider the embedding $U : \Mod R \lrt \Mod (\mmod R)$. It induces a functor
$$U_{\centerdot}: (\Mod (\mmod R) , \Mod R) \lrt (\Mod R, \Mod R).$$

Let $F $ be an object in $(\Mod R , \Mod R)$. Then for a functor $G \in \Mod (\mmod R)$, set
$$ (U^{\centerdot}F)(G):= \Coker (F(M_1) \st{F(d_1)} \lrt F(M_0)),$$
where $$0 \lrt (-,M_2) \st{(-, d_2)}\lrt (-, M_1) \st{(-, d_1)} \lrt (-, M_0) \lrt G \lrt 0$$ is a special flat resolution of $G$.
Moreover, a map $g: G \lrt G'$ in $\Mod(\mmod R)$ can be lifted to a map of their special flat resolutions. This, in turn, induces a map $U^{\centerdot}F(g): U^{\centerdot}F(G) \lrt U^{\centerdot}F(G')$. Known techniques in homological algebra guaranteeing that the definitions of $(U^{\centerdot}F)(G)$ and $(U^{\centerdot}F)(g)$ are independent of the choice of special flat resolutions of $G$ and $G'$. In fact $U^{\centerdot}F$ is a functor. It is straightforward to check that $(U^{\centerdot}F, U_{\centerdot})$ is an adjoint pair and similar to Proposition 2.1 of \cite{As1}, we have the following. We skip the details of the proof.

\begin{sproposition}\label{leftaadjoint}
Consider the adjoint pair $(U^{\centerdot}F, U_{\centerdot})$. For a functor $F \in (\Mod R, \Mod R)$, $U_{\centerdot} U^{\centerdot} F=F$. Moreover, $U^{\centerdot}F$ is an exact functor if $F$ is so.
\end{sproposition}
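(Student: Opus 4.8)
The plan is to prove the two assertions separately, exploiting the fact that the functor $U^{\centerdot}F$ has been built via the special flat resolutions that the embedding $U$ produces.

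First I would establish the identity $U_{\centerdot}U^{\centerdot}F = F$. By definition, $(U_{\centerdot}U^{\centerdot}F)(M) = (U^{\centerdot}F)(U(M)) = (U^{\centerdot}F)((-,M))$ for an $R$-module $M$. The key observation is that when $G = (-,M)$ is itself flat, it already \emph{is} a representable/flat functor, so it admits the trivial special flat resolution
$$0 \lrt (-,M) \st{=} \lrt (-,M) \lrt 0 \lrt \cdots$$
concentrated in a single degree, i.e.\ with $M_0 = M$ and $M_1 = M_2 = 0$ (or more carefully, one may take $M_1 \to M_0$ to be $0 \to M$). Feeding this resolution into the defining formula gives $(U^{\centerdot}F)((-,M)) = \Coker(F(0) \lrt F(M)) = F(M)$. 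Since the construction is independent of the chosen special flat resolution, this computes $U_{\centerdot}U^{\centerdot}F(M) = F(M)$ naturally in $M$, and hence $U_{\centerdot}U^{\centerdot}F = F$.

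Next I would prove that $U^{\centerdot}F$ is exact whenever $F$ is exact. Here I would follow Auslander's strategy from \cite[Proposition 2.1]{As1}. Given a short exact sequence $0 \to G' \to G \to G'' \to 0$ in $\Mod(\mmod R)$, the plan is to lift it to a short exact sequence of special flat resolutions using the horseshoe-type construction for the flat/special-flat resolutions supplied by Herzog's result; this produces, degreewise, short exact sequences $0 \to (-,M_i') \to (-,M_i) \to (-,M_i'') \to 0$ with the $M_i$ in ${\rm P}\Prj R$ (equivalently flat functors). Applying the exact functor $F$ to the corresponding sequences of $R$-modules $M_i$, and then passing to cokernels in degrees $0$ and $1$, I would invoke the snake lemma (or the long exact sequence comparing the cokernel rows) to conclude that $U^{\centerdot}F$ carries the original short exact sequence to a short exact sequence; the length-two bound on the flat resolutions ensures the relevant higher terms vanish so that no obstruction survives.

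The main obstacle I anticipate is the comparison/independence bookkeeping: one must ensure that the lifted maps between special flat resolutions, and the resulting maps on cokernels, are genuinely well defined and that the snake-lemma argument does not produce a spurious connecting term. The crux is that the special flat resolutions have length at most $2$ and that $F$ is exact on the flat functors $(-,M_i)$ precisely because these correspond, under the equivalence $\Mod R \simeq \CF(\mmod R)$, to honest $R$-modules on which $F$ is evaluated. Since, as noted in the excerpt, these verifications run parallel to the classical arguments of \cite{As1}, I would keep them at the level of an outline and cite Auslander's proof for the routine diagram chases.
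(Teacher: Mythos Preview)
Your proposal is correct and follows essentially the same route as the paper, which simply defers to Auslander's original argument in \cite[Proposition~2.1]{As1}: the identity $U_{\centerdot}U^{\centerdot}F=F$ via the trivial flat resolution of $(-,M)$, and exactness via a horseshoe construction on flat resolutions together with the snake/long exact sequence. The only point worth tightening is that the horseshoe need not yield a \emph{special} flat resolution of the middle term, but this is harmless since evaluating any flat resolution $0\to(-,M_2)\to(-,M_1)\to(-,M_0)\to G\to 0$ at $R$ shows $0\to M_2\to M_1\to M_0$ is exact, so the higher homologies vanish after applying the exact $F$ and the independence-of-resolution clause handles the rest.
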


Let $i: \Mod R \lrt \Mod R$ be the identity functor and consider the functor $$\nu=U^{\centerdot}i: \Mod(\mmod R) \lrt \Mod R.$$ It follows from the above properties that if
$$  0 \lrt (-,M_2) \st{(-,d_2)} \lrt (-,M_1) \st{(-,d_1)} \lrt (-,M_0) \lrt F \lrt 0$$
is a special flat resolution of $F$ in $\Mod(\mmod R)$, then $\nu(F)= \Coker(M_1 \st{d_1} \lrt M_0)$.

\sss Let $\Modd(\mmod R)$ be the full subcategory of $\Mod(\mmod R)$ consisting of all functors $F$ such that $\nu(F)=0$. Note that since $i$ is an exact functor, $\nu$ is exact. So, $\Modd(\mmod R)$ is a Serre subcategory of $\Mod(\mmod R)$, see Sec. 3 of \cite{As1}.

\begin{sremark}
A functor $F \in \Mod (\mmod R)$ is called effaceable if $F(P)=0$, for every finitely generated projective $R$-module $P$, see \cite[p. 141]{G}. Also, by definition of the functor $U^{\centerdot}$, a functor $F \in \Mod(\mmod R)$ belongs to $\Modd(\mmod R)$ if and only if $F(R)=0$, or equivalently $F(P)=0$, for every finitely generated projective $R$-module $P$. This fact identifies $\Modd(\mmod R)$ with the effaceable functors in $\Mod (\mmod R)$.
\end{sremark}

Let $F \in \Mod(\mmod R)$. If we consider a special flat resolution of $F$, a similar argument as in \cite[pp. 203-204]{As1} can be applied now, to prove the following result.
Since this sequence plays a central role in the paper, we include a brief proof.

\begin{sproposition}\label{Aus-ExtSeq}
For each functor $F $ in $\Mod (\mmod R)$ there is an exact sequence
$$ 0 \lrt F_0 \lrt F \lrt ( -, \nu(F)) \lrt F_1 \lrt 0$$
such that $F_0, F_1 \in \Modd(\mmod R)$.
\end{sproposition}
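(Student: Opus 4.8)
The strategy is to take a special flat resolution of $F$, apply the functor $\nu$ together with the representable functor $(-, \nu(F))$, and compare the two "resolutions" via a commutative diagram, extracting the desired four-term sequence as the cone/kernel-cokernel data of a comparison map. Concretely, start with a special flat resolution
$$0 \lrt (-,M_2) \st{(-,d_2)} \lrt (-,M_1) \st{(-,d_1)} \lrt (-,M_0) \st{\varepsilon} \lrt F \lrt 0,$$
so that by the description of $\nu = U^{\centerdot}i$ we have $\nu(F) = \Coker(M_1 \st{d_1} \lrt M_0)$. Write $\pi: M_0 \lrt \nu(F)$ for the canonical projection. The point is that applying the fully faithful embedding $U = (-, -)$ to the complex $M_2 \to M_1 \to M_0 \to \nu(F)$ gives a complex of representables, and this complex should be compared with the flat resolution of $F$ above. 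I would therefore build the commutative diagram with the flat resolution of $F$ on top and the image under $U$ of $M_2 \to M_1 \to M_0 \to \nu(F)$ on the bottom, the vertical maps being the identity on the first three representable terms and $\varepsilon$ followed by $(-,\pi)$ on the comparison between $F$ and $(-,\nu(F))$.

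**Key steps.** First I would establish the natural comparison map $\eta_F: F \lrt (-,\nu(F))$. Since $\varepsilon: (-,M_0) \lrt F$ is an epimorphism and $(-,\pi)(-,d_1) = 0$ (because $\pi d_1 = 0$ by definition of the cokernel), the composite $(-,\pi): (-,M_0) \lrt (-,\nu(F))$ factors through $\varepsilon$, yielding a well-defined $\eta_F$ with $\eta_F \circ \varepsilon = (-,\pi)$; one checks naturality and independence of the resolution using the same lifting argument already used to define $\nu$. Second, I would define $F_0 := \Ker \eta_F$ and $F_1 := \Coker \eta_F$, so that the asserted sequence $0 \to F_0 \to F \to (-,\nu(F)) \to F_1 \to 0$ is exact by construction. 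The real content is then to verify $F_0, F_1 \in \Modd(\mmod R)$, i.e. that $\nu(F_0) = 0 = \nu(F_1)$, equivalently $F_0(R) = 0 = F_1(R)$. Third, I would evaluate everything at a finitely generated projective, using that $(-, M_i)$ evaluated on $R$ recovers $M_i$ (since $(-, M_i)(R) = \Hom_R(R, M_i) = M_i$), so that evaluating the bottom row of the diagram at $R$ returns exactly the complex $M_2 \to M_1 \to M_0 \to \nu(F)$, whose relevant homology is controlled by the definition of $\nu(F)$ as the cokernel of $d_1$ and the representable functor $(-,\nu(F))$ evaluated at $R$ being $\nu(F)$ itself.

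**The main obstacle.** The crux is the verification that $\eta_F$ becomes an isomorphism after evaluating at $R$, so that both $F_0(R)$ and $F_1(R)$ vanish. Evaluating $\eta_F$ at $R$ gives a map $F(R) \lrt (-,\nu(F))(R) = \nu(F)$, and I must show this is an isomorphism. Using the flat resolution evaluated at $R$ — here the special nature of the resolution matters, since the terms are representables on pure-projectives and evaluation at $R$ is exact on the relevant part — one sees $F(R) = \Coker((-,d_1)(R)) = \Coker(M_1 \st{d_1} \lrt M_0) = \nu(F)$, and that under this identification $\eta_F(R)$ is the identity. Pinning down precisely why $F(R) \cong \Coker(d_1)$, i.e. that applying the evaluation-at-$R$ functor to the top row keeps it right-exact in the required spot, is where I expect the argument to need care; this is exactly the place where the special flat resolution and the exactness properties from Proposition \ref{leftaadjoint} do the work, mirroring Auslander's original computation in \cite[pp.\ 203--204]{As1}. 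Once $\eta_F(R)$ is identified as an isomorphism, the conclusion $F_0, F_1 \in \Modd(\mmod R)$ follows immediately from left-exactness of evaluation at $R$ on kernels and its behavior on cokernels.
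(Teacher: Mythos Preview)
Your approach is correct and essentially the same as the paper's: both start from a (special) flat resolution, produce the comparison map $F\to(-,\nu(F))$ induced by the cokernel projection $\pi:M_0\to\nu(F)$, and read off $F_0$, $F_1$ as its kernel and cokernel. The paper packages this via an intermediate module $M=\Coker(d_2)=\im(d_1)$ and a two-row diagram, identifying $F_0=\Coker\big((-,M_1)\to(-,M)\big)$ and $F_1=\Coker\big((-,M_0)\to(-,\nu(F))\big)$; these coincide with your $\Ker\eta_F$ and $\Coker\eta_F$.

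One remark on your ``main obstacle'': there is no obstacle. Evaluation at $R$ (indeed at any object of $\mmod R$) is an \emph{exact} functor on $\Mod(\mmod R)$, since limits and colimits in a functor category are computed pointwise. Hence evaluating the flat resolution at $R$ immediately gives the exact sequence $0\to M_2\to M_1\to M_0\to F(R)\to 0$, so $F(R)=\Coker(d_1)=\nu(F)$ and $\eta_F(R)$ is the identity; no appeal to specialness or to Proposition~\ref{leftaadjoint} is needed at this step. The special flat resolution is used only earlier, to make $\nu$ well-defined and functorial, not in the present verification. The paper's version avoids even this small computation by observing that $F_0$ and $F_1$ are each the cokernel of $(-,f)$ for a surjection $f$ of modules, hence vanish on $R$ automatically.
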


\begin{proof}
Consider a special flat resolution
$$  0 \lrt (-,M_2) \st{(-,d_2)} \lrt (-,M_1) \st{(-,d_1)} \lrt (-,M_0) \lrt F \lrt 0$$
of $F$.
By definition $\nu(F)= \Coker (M_1 \st{d_1} \lrt M_0)$. If we set $M := \Coker (M_2 \st{d_2} \lrt M_1)$, then there are short exact sequences
\[ 0 \lrt M_2 \lrt M_1 \lrt M \lrt 0 \ \ \text{and} \ \ 0 \lrt M \lrt M_0 \lrt \nu(F) \lrt 0.\]
So there is the following commutative diagram
\[ \xymatrix@C-0.5pc@R-0.5pc{ &&& 0 \ar[d] &0 \ar@{.>}[d] & \\ 0 \ar[r] & (-, M_2) \ar[r] \ar@{=}[d] & (-,M_1) \ar[r] \ar@{=}[d] & (-,M) \ar[r] \ar[d] & F_0 \ar@{.>}[d] \ar[r] & 0 \\
0\ar[r] & (-,M_2) \ar[r] & (-, M_1) \ar[r] & (-,M_0)\ar[d] \ar[r] & F \ar[r] \ar@{.>}[dl] & 0 \\
&&& (-, \nu(F)) \ar[d] & & \\ &&& F_1 \ar[d] &&\\
&&& 0 && }\]
In the above diagram $F_0$, resp. $F_1$, is a  cokernel of $((-, M_1) \lrt (-,M))$, resp.  $((-,M_0) \lrt (-,\nu(F))$, and so belongs to $\Modd(\mmod R)$. So, by the Snake Lemma, there is a unique exact sequence, identified by dashed line,
$$0 \lrt F_0 \lrt F \lrt (-, \nu(F)) \lrt F_1 \lrt 0$$
making the above diagram commutative. This completes the proof.
\end{proof}

\begin{sremark}\label{Hom&Ext}
Let $F$ be a functor in $\Modd (\mmod R)$. Then, similar to Proposition 3.2 of \cite{As1}, we can deduce that $\Ext^i(F, (-,M))=0$, for $i=0,1$ and for all $M \in \Mod R$. We do not include the proof as it follows by similar argument.
\end{sremark}

As a direct consequence of Proposition \ref{Aus-ExtSeq} and Remark \ref{Hom&Ext}, we have the following theorem. It can be proven using Proposition II.2 of \cite{Gab}, but by a completely different approach.

\begin{stheorem}(Compare \cite[Theorem 2.3]{K})\label{Loc-Seq}
There exists a localization sequence
\[\xymatrix@C=0.5cm{ \Modd (\mmod R) \ar[rrr]  &&& \Mod (\mmod R)  \ar[rrr]^{\nu} \ar@/^1.5pc/[lll] &&& \Mod R  \ar@/^1.5pc/[lll]^{U} }\]
of abelian categories. In particular, $\Mod R \simeq \frac{\Mod (\mmod R)}{\Modd(\mmod R)}$.
\end{stheorem}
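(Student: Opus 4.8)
The plan is to read off the functors of the localization sequence from the diagram and then to verify, one by one, those axioms of Definition \ref{Def-Rec} that involve only $i_*$, $i^!$, $j^*=\nu$ and $j_*=U$. I take $i_*$ to be the inclusion $\Modd(\mmod R) \hookrightarrow \Mod(\mmod R)$, and set $j^* := \nu$ and $j_* := U$. Several conditions are already at hand: $\nu$ is exact (noted before the statement, since $i$ is exact); $\Ker \nu = \Modd(\mmod R)$ by the effaceability description recorded above, so that $\im i_* = \Ker j^*$, which is axiom $(iii)$; and $U$ is fully faithful, while $i_*$ is trivially so, which is axiom $(ii)$. The pair $(\nu, U)$ is adjoint, with $\nu$ a left adjoint of $U$: applying $\Hom(-,(-,M))$ to a projective presentation $(-,M_1) \rt (-,M_0) \rt G \rt 0$ of $G$ and using the full faithfulness of $U$ on flat functors identifies $\Hom(G,(-,M))$ with $\Hom_{\Mod R}(\nu(G),M)$, naturally in both variables. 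Finally $\nu U \cong \mathrm{id}_{\Mod R}$, since $U(M)=(-,M)$ is flat, hence its own special flat resolution, whence $\nu(U(M)) = \Coker(0 \rt M) = M$.

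The real content is to produce the right adjoint $i^!$ of the inclusion, that is, to show that $\Modd(\mmod R)$ admits a largest subobject inside each functor. Given $F \in \Mod(\mmod R)$, I would feed it into the four term sequence of Proposition \ref{Aus-ExtSeq},
\[ 0 \lrt F_0 \lrt F \st{\eta_F}\lrt (-, \nu(F)) \lrt F_1 \lrt 0, \qquad F_0, F_1 \in \Modd(\mmod R), \]
in which $\eta_F$ is the unit of $(\nu, U)$, and define $i^!(F) := F_0 = \Ker \eta_F$. To see that $F_0$ is the largest subobject of $F$ lying in $\Modd(\mmod R)$, take any subobject $C \hookrightarrow F$ with $C \in \Modd(\mmod R)$; the composite $C \rt (-,\nu(F))$ vanishes because $\Hom(C,(-,\nu(F)))=0$ by the $i=0$ case of Remark \ref{Hom&Ext}, so $C \subseteq \Ker \eta_F = F_0$. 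As $\Modd(\mmod R)$ is a Serre subcategory, the image of an arbitrary morphism $C \rt F$ with $C \in \Modd(\mmod R)$ again lies in $\Modd(\mmod R)$, hence in $F_0$; thus every such morphism factors through $F_0$, yielding the natural isomorphism $\Hom(i_* C, F) \cong \Hom_{\Modd(\mmod R)}(C, i^! F)$. This is the adjunction $(i_*, i^!)$, and with it axiom $(i)$ is complete and all the conditions defining a localization sequence are in place.

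For the concluding equivalence, $\nu$ annihilates $\Modd(\mmod R)$ and so factors as $\bar\nu \circ Q$ through the Gabriel quotient $Q : \Mod(\mmod R) \rt \Mod(\mmod R)/\Modd(\mmod R)$. The functor $\bar\nu$ is essentially surjective because $\nu U \cong \mathrm{id}$. For full faithfulness I would compute the quotient Hom-groups, $\Hom(QF, QG) = \varinjlim \Hom(F', G/G')$ taken over subobjects $F' \subseteq F$ with $F/F' \in \Modd(\mmod R)$ and subobjects $G' \subseteq G$ with $G' \in \Modd(\mmod R)$, and identify this colimit with $\Hom_{\Mod R}(\nu(F), \nu(G))$, once more steering everything through the four term sequence.

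I expect this last colimit computation to be the main obstacle; the two preceding steps are formal adjunction bookkeeping once $i^!$ is in hand. The crucial point is that both vanishings of Remark \ref{Hom&Ext} are needed simultaneously: the $\Hom$-vanishing on $\Modd(\mmod R)$ annihilates the contributions of the torsion subobject $F_0 = \Ker \eta_F$ (and of the torsion subobject of $G$), while the $\Ext^1$-vanishing annihilates the contribution of the cokernel $F_1$, so that each term $\Hom(F',G/G')$ of the directed system maps isomorphically onto $\Hom_{\Mod R}(\nu(F),\nu(G))$. Equivalently, one shows that the flat functors $U(M)=(-,M)$ are precisely the objects that are saturated with respect to $\Modd(\mmod R)$ and that $\eta$ is the reflection onto them, thereby identifying $\nu$ with the localization functor at $\Modd(\mmod R)$. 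One could instead invoke the Gabriel--Popescu description of an exact reflector, but routing the argument through Proposition \ref{Aus-ExtSeq} is the ``completely different approach'' advertised before the statement.
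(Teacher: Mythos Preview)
Your proof is correct and takes a somewhat different, more self-contained route than the paper's. The paper establishes the adjunction $(\nu,U)$ by feeding the four-term sequence of Proposition \ref{Aus-ExtSeq} into $\Hom(-,(-,M))$ and killing the contributions of $F_0,F_1$ via both vanishings of Remark \ref{Hom&Ext}; it then invokes an external result, \cite[Lemma 2.1]{K97}, to conclude that the full localization sequence (in particular the right adjoint $i^!$) exists. You instead get $(\nu,U)$ by the shorter direct computation from a flat presentation, and then use Proposition \ref{Aus-ExtSeq} together with only the $\Hom$-vanishing of Remark \ref{Hom&Ext} to exhibit $i^!(F)=F_0$ explicitly as the largest $\Modd$-subobject. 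This avoids the external citation and makes the role of the four-term sequence clearer: it manufactures the coreflection onto $\Modd(\mmod R)$ rather than the adjunction $(\nu,U)$. Two small remarks. First, your last paragraph (the direct colimit computation of $\Hom_{\CA/\CC}$) is not needed: once the localization-sequence axioms are in place, the equivalence $\Mod R \simeq \Mod(\mmod R)/\Modd(\mmod R)$ follows formally from Remark \ref{Properties}, so the ``main obstacle'' you anticipate does not arise. Second, your adjunction argument tacitly uses $\nu(G)=\Coker(M_1\to M_0)$ for a \emph{projective} presentation, whereas $\nu$ is defined via special flat resolutions; this is harmless since evaluating either presentation at $R$ shows both cokernels equal $G(R)$, but it is worth saying.
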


\begin{proof}
Let $F$ be a functor in $\Mod (\mmod R)$. Then by Proposition \ref{Aus-ExtSeq} there is an exact sequence
$$ 0 \lrt F_0 \lrt F \lrt (-, \nu(F)) \lrt F_1 \lrt 0$$
with $F_0 , F_1 \in \Modd (\mmod R)$.
In view of Remark \ref{Hom&Ext}, $\Ext^i (F_j, (-,M)) =0$, for $i,j \in \{0, 1\}$ and for every module $M \in \Mod R$. This fact, in view of the fully faithfulness of the functor $U$ imply the following natural isomorphisms
\[\begin{array}{ll}
\Hom(F, (-,M)) & \cong \Hom((-, \nu(F)), (-,M))\\
& \cong \Hom_R(\nu(F), M).
\end{array}\]
Thus $\nu: \Mod (\mmod R) \lrt \Mod R$ provides a left adjoint of $U: \Mod R \lrt \Mod (\mmod R)$. Hence the existence of the desired localization sequence follows from Lemma 2.1 of \cite{K97}.
\end{proof}

\sss Let $\K^*_{R \mbox{-} \ac}(\Mod (\mmod R))$ denote the full subcategory of $\K^*(\Mod(\mmod R))$ consisting of all complexes $(\X,d^i)$ such that $$\X(R): \ \cdots \lrt X^{i-1}(R) \st{d^{i-1}(R)} \lrt X^i(R) \st{d^i(R)} \lrt X^{i+1}(R) \lrt \cdots$$ is an acyclic complex of abelian groups, where $* \in \{\text{blank}, -, \bb\}$. Note that if $\X$ is a complex in $\K^*_{R\mbox{-} \ac}(\Mod (\mmod R))$, then $\X(P)$ is acyclic for all $P \in \prj R$.

It can be easily checked that $\K^*_{R\mbox{-} \ac}(\Mod (\mmod R))$ is a thick subcategory of $\K^*(\Mod(\mmod R))$. So we can form the quotient category $$\K^*(\Mod (\mmod R))/ \K^*_{R \mbox{-} \ac}(\Mod (\mmod R)),$$  which we denote it by $\D^*_{R}(\Mod (\mmod R))$.\\

In our next theorem we show that there is a triangle equivalence between $\D(\Mod R)$ and $\D_{R}(\Mod (\mmod R))$. This fact has a short proof using a `ring with several objects' version of Corollary 3.6 of \cite{AHV}. There it is proved that for an artin algebra $\La$ and an $R$-module $M \in \mmod\La$, there is an equivalence of triangulated categories $\D^{\bb}_M(\Mod\La)\simeq \D^{\bb}(\Mod\End_{\La}(M))$. It can be generalized to the functor categories without severe problems. So deriving the equivalence of Theorem \ref{Loc-Seq} in view of this fact, implies the equivalence mentioned above. Here we present a constructive proof, introducing the equivalence map as we need its exact definition in our next results. To this end, we need the following lemma that provides a complex version of Proposition \ref{Aus-ExtSeq} and Remark \ref{Hom&Ext}.

\begin{slemma}\label{ExSeqCom} \label{Hom0}
$(i)$ Let $\X \in \C(\Mod (\mmod R))$. There exists an exact sequence
$$ 0 \lrt \X_0 \lrt \X \lrt (-, \nu(\X)) \lrt \X_1 \lrt 0,$$
where $\X_0 $ and $\X_1$ are complexes over $\Modd(\mmod R)$ and $\nu(\X)$ is a complex over $\Mod R$ whose i-th degree is $\nu(X^i)$.\\

\noindent $(ii)$ Let ${\bf F} \in \Modd(\mmod R)$. Then $\Ext^i({\bf F},(-, {\bf C}))=0$, for $j=0, 1$ and for every ${\bf C} \in \C(\Mod (\mmod R))$.
\end{slemma}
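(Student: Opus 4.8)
The plan is to obtain both parts by applying the already-established single-functor results of Proposition \ref{Aus-ExtSeq} and Remark \ref{Hom&Ext} degreewise, and then checking that all constructions are compatible with the differentials so that they assemble into statements about complexes. For part $(i)$, I would first apply Proposition \ref{Aus-ExtSeq} in each degree $i$ to the functor $X^i \in \Mod(\mmod R)$, producing an exact sequence $0 \lrt F_0^i \lrt X^i \lrt (-,\nu(X^i)) \lrt F_1^i \lrt 0$ with $F_0^i, F_1^i \in \Modd(\mmod R)$. The key point is naturality: since $\nu$ is a functor and the construction in Proposition \ref{Aus-ExtSeq} is built from special flat resolutions in a way that lifts along maps of functors, the differentials $d^i \colon X^i \lrt X^{i+1}$ induce compatible differentials on the terms $F_0^i$, on $(-,\nu(X^i)) \cong (-,\nu(\X)^i)$, and on $F_1^i$. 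This lets me organize the degreewise sequences into a short exact sequence of complexes $0 \lrt \X_0 \lrt \X \lrt (-,\nu(\X)) \lrt \X_1 \lrt 0$, where $\X_0$ and $\X_1$ are complexes over $\Modd(\mmod R)$ and $(-,\nu(\X))$ has $i$-th term $(-,\nu(X^i))$, as desired.

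For part $(ii)$, the strategy is to reduce the statement about a complex ${\bf C}$ to the degreewise vanishing already recorded in Remark \ref{Hom&Ext}. I would use the fact that $(-,{\bf C})$ is the complex with $i$-th term $(-,C^i)$, and that for a fixed functor ${\bf F} \in \Modd(\mmod R)$ the groups $\Ext^j({\bf F}, -)$ for $j=0,1$ vanish on every representable-type term $(-,C^i)$ by Remark \ref{Hom&Ext}. The remaining work is to pass from this degreewise information to the vanishing of $\Ext^j$ computed in the category of complexes $\C(\Mod(\mmod R))$; here I would either invoke the standard identification of $\Ext$-groups in a category of complexes with a totalization of the degreewise $\Ext$-groups, or more directly use that $\Modd(\mmod R)$-objects have flat resolutions of length at most $2$ (coming from the length-$2$ special flat resolutions noted before Proposition \ref{Aus-ExtSeq}) so that the relevant $\Ext^j$ can be computed by a short explicit resolution whose terms are built from representables.

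The main obstacle I anticipate is not the degreewise vanishing itself — that is immediate from the cited results — but rather the compatibility-with-differentials bookkeeping in part $(i)$: one must check that the choices of special flat resolutions in each degree can be made (or corrected up to the appropriate equivalence) so that the induced maps $F_0^i \lrt F_0^{i+1}$ and $F_1^i \lrt F_1^{i+1}$ genuinely square to zero and make the four-term sequence a sequence of complexes rather than merely a degreewise family. Because $U$ is fully faithful and $\nu$ is exact (as recorded after Proposition \ref{leftaadjoint} and in the remark identifying $\Modd(\mmod R)$ with effaceable functors), the functoriality needed to lift $d^i$ through the construction is available, so I expect this obstacle to be resolved by the same naturality argument used implicitly in Proposition \ref{Aus-ExtSeq}; nonetheless it is the step requiring the most care, and I would present it explicitly rather than relegating it to ``straightforward.''
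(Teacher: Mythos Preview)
For part~$(i)$ your plan coincides with the paper's: apply Proposition~\ref{Aus-ExtSeq} degreewise and assemble. The paper resolves your flagged obstacle not by lifting $d^i$ through compatible special flat resolutions but by invoking Remark~\ref{Hom&Ext}: the vanishing of $\Hom(X_0^i,(-,\nu(X^{i+1})))$ and of $\Ext^j(X_1^i,(-,\nu(X^{i+1})))$ for $j=0,1$ forces existence and \emph{uniqueness} of a map $\overline{d^i}\colon\nu(X^i)\to\nu(X^{i+1})$ making the square with $d^i$ commute, and this uniqueness then yields $\overline{d^{i+1}}\,\overline{d^i}=0$ (and likewise for the induced $d_0^i,d_1^i$). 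So the resolution-choice issue you worry about never arises.

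For part~$(ii)$ the paper takes a different route. Rather than reducing to degreewise vanishing, it works directly in the abelian category $\C(\Mod(\mmod R))$: projective objects there are split exact complexes of projectives, hence of the form $(-,{\bf M})$ with ${\bf M}$ a complex of pure-projective $R$-modules, and the extension $\C(U)$ of $U$ to complexes is fully faithful. One then argues as in Auslander's Proposition~3.2, reducing to $\Hom_{\C}({\bf F},(-,{\bf C}))=0$; this holds because any composite $(-,{\bf M})\to{\bf F}\to(-,{\bf C})$ equals $(-,f)$ for some chain map $f$, and evaluating at $R$ (where ${\bf F}$ vanishes termwise) forces $f=0$. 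Your degreewise-plus-assembly option~(a) would also succeed, but your option~(b) is not sound as stated: flat functors $(-,M)$ are not projective in $\Mod(\mmod R)$ unless $M$ is pure-projective, so a length-$2$ flat resolution does not compute $\Ext^j({\bf F},-)$. Note also that in the paper's proof and in its application in Theorem~\ref{Thm1}, ${\bf F}$ is a complex over $\Modd(\mmod R)$, not a single functor.
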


\begin{proof}
$(i)$ Let $(\X,d^i)$ be a complex over $\Mod(\mmod R)$. By Proposition \ref{Aus-ExtSeq}, for every $i \in \Z$, there is an exact sequence
$$ 0 \lrt X^i_0 \lrt X^i \lrt ( -, \nu(X^i)) \lrt X^i_1 \lrt 0,$$
such that $X^i_0$ and $X^i_1$ belong to $\Modd (\mmod R)$.
In view of Lemma \ref{Hom&Ext}, for $j=0, 1$, $$\Ext^j_R(X_0^i, (-, \nu(X^{i+1})))=0=\Ext^j_R(X_1^i, (-, \nu(X^{i+1}))),$$ for all $i \in \Z$. Thus, for every $i \in \Z$ there is a unique morphism $\overline{d^i}: \nu(X^i) \lrt \nu(X^{i+1})$ making the following diagram commutative
\[\xymatrix{ X^i \ar[r] \ar[d]^{d^i} & (-, \nu(X^i))  \ar[d]^{(-,\overline{d^i})} \\
 X^{i+1} \ar[r] & (-, \nu(X^{i+1})).}\]
Moreover, there exist unique morphisms $d_0^i: X_0^i \lrt X_0^{i+1}$ and $d_1^i: X_1^i \lrt X_1^{i+1}$ such that the following diagram
\[\xymatrix{
0 \ar[r] & X_0^i \ar[r] \ar[d]^{d_0^i} & X^i \ar[r] \ar[d]^{d^i} & (-, \nu(X^i)) \ar[r] \ar[d]^{(-,\overline{d^i})} & X_1^i \ar[r] \ar[d]^{d_1^i} & 0 \\
0 \ar[r] & X_0^{i+1} \ar[r] & X^{i+1} \ar[r] & (-, \nu(X^{i+1})) \ar[r] & X_1^{i+1} \ar[r] & 0}\]
is commutative.
The uniqueness of $d^i_0$, $d^i_1$ and $\bar{d^i}$ yield the existence of complexes
$$\X_0 : \cdots \lrt X^{i-1}_0 \st{d_0^{i-1}} \lrt X^i_0 \st{d^i_0} \lrt X^{i+1}_0 \lrt \cdots, \ \ \ \ \ \ \ $$
$$\X_1 : \cdots \lrt X^{i-1}_1 \st{d_1^{i-1}} \lrt X^i_1 \st{d^i_1} \lrt X^{i+1}_1 \lrt \cdots, \ \ \text{and}$$
$$\nu(\X) : \cdots \lrt \nu(X^{i-1}) \st{\overline{d^{i-1}}} \lrt \nu(X^i) \st{\overline{d^i}} \lrt \nu(X^{i+1}) \lrt \cdots$$
such that fit in the following exact sequence of complexes
$$ 0 \lrt \X_0 \lrt \X \lrt (-, \nu(\X)) \lrt \X_1 \lrt 0.$$
Hence, we get the desired exact sequence.

$(ii)$ Note that $\C(\Mod(\mmod R))$ is an abelian category with enough projectives \cite[Theorem 10.43]{Rot}. Moreover, by Theorem 10.42 of \cite{Rot}, projective complexes are split exact complexes of projectives. Let $\PP$ be a projective complex. Since $U: \Mod R \lrt \Mod(\mmod R)$ is full and faithful, in view of \ref{ProjObj}, a projective complex $\PP$ is isomorphic to the complex
$$ (-, {\bf M}): \ \ \cdots \lrt (-, M^{i-1}) \lrt (-,M^i) \lrt (-,M^{i+1}) \lrt \cdots, $$
where for all $i$, $M^i$ is a pure-projective $R$-module.

Since $(-, {\bf C})$ is left exact, to prove the result, it is enough to show that $\Hom({\bf F}, (-, {\bf C}))=0$, see Proposition 3.2 of \cite{As1}. To see this, asume that $\tau$ is a map in $\Hom({\bf F}, (-, {\bf C}))$. Then, we have a map $\tau \circ \pi: (-, {\bf M}) \lrt (-, {\bf C})$ of complexes over $\Mod (\mmod R)$. The fully faithful functor $U: \Mod R \lrt \Mod(\mmod R)$ can be naturally extended to the full and faithful functor $\C(U): \C(\Mod R) \lrt \C( \Mod (\mmod R))$. Thus, $\tau \circ \pi = (-, f)$, where $f: {\bf M} \lrt {\bf C}$ is a chain map of complexes.
Since ${\bf F}$ is a complex over $\Modd (\mmod R)$,
$${\bf F}(R): \ \cdots \lrt F^{i-1}(R) \lrt F^i(R) \lrt F^{i+1}(R) \lrt \cdots$$
is a zero complex. So $\tau$ vanishes in $\C(\Mod (\mmod R))$.
\end{proof}

\begin{stheorem}\label{Thm1}
There exists an equivalence
$$\D(\Mod R) \simeq \D_{R}(\Mod (\mmod R))$$
of triangulated categories, where as in our setup $R$ is a right coherent ring.
\end{stheorem}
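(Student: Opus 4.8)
The plan is to construct two triangulated functors $\bar\nu\colon \D_R(\Mod(\mmod R))\lrt \D(\Mod R)$ and $\phi\colon \D(\Mod R)\lrt \D_R(\Mod(\mmod R))$ and show they are mutually quasi-inverse. The cornerstone is a single evaluation identity: applying the four term sequence of Proposition \ref{Aus-ExtSeq} at the object $R\in\mmod R$ and using that functors in $\Modd(\mmod R)$ vanish on $R$, together with $(-,N)(R)=\Hom_R(R,N)\cong N$, yields a natural isomorphism $\nu(F)\cong F(R)$, hence degreewise a natural isomorphism of complexes $\nu(\X)\cong \X(R)$ for every $\X\in\C(\Mod(\mmod R))$. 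Consequently $\nu$ carries $\K_{R\mbox{-}\ac}(\Mod(\mmod R))$ into the acyclic complexes of $\Mod R$, so the triangulated functor $\K(\nu)$ followed by $\K(\Mod R)\lrt \D(\Mod R)$ kills $\K_{R\mbox{-}\ac}(\Mod(\mmod R))$ and factors through
$$\bar\nu\colon \D_R(\Mod(\mmod R))\lrt \D(\Mod R).$$
Dually, since $U$ is additive, $\K(U)$ is a triangulated functor, and because $(-,\Y)(R)\cong\Y$ it sends acyclic complexes of $\Mod R$ into $\K_{R\mbox{-}\ac}(\Mod(\mmod R))$; composing with the quotient functor onto $\D_R(\Mod(\mmod R))$ therefore yields a triangulated functor $\phi\colon \D(\Mod R)\lrt \D_R(\Mod(\mmod R))$.

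The composite $\bar\nu\circ\phi$ is the easy direction. Since $U$ is fully faithful, the counit $\nu U\lrt \id_{\Mod R}$ of the adjunction $(\nu,U)$ of Theorem \ref{Loc-Seq} is a natural isomorphism (equivalently, a special flat resolution of the flat functor $(-,M)$ may be taken to be $(-,M)$ itself, whence $\nu(-,M)\cong M$). Applying this isomorphism degreewise gives $\bar\nu\circ\phi\cong\id_{\D(\Mod R)}$.

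The substantial direction is $\phi\circ\bar\nu\cong\id$. Here I would use that the middle map of the four term sequence of Lemma \ref{ExSeqCom}$(i)$ is precisely the unit $\eta_{\X}\colon \X\lrt (-,\nu(\X))=U\nu(\X)$ of $(\nu,U)$, hence natural in $\X$, with kernel $\X_0$ and cokernel $\X_1$ complexes over $\Modd(\mmod R)$. These vanish on $R$, so $\X_0,\X_1\in\K_{R\mbox{-}\ac}(\Mod(\mmod R))$. Because evaluation at $R$ is exact, every complex acyclic in $\Mod(\mmod R)$ already lies in $\K_{R\mbox{-}\ac}(\Mod(\mmod R))$; thus $\K_{\ac}(\Mod(\mmod R))\subseteq \K_{R\mbox{-}\ac}(\Mod(\mmod R))$ and $\D_R(\Mod(\mmod R))$ is a Verdier quotient of the ordinary derived category $\D(\Mod(\mmod R))$. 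I would then factor $\eta_\X$ through its image $\CK$, split the sequence into two short exact sequences of complexes $0\lrt\X_0\lrt\X\lrt\CK\lrt 0$ and $0\lrt\CK\lrt(-,\nu(\X))\lrt\X_1\lrt 0$, view each as a triangle in $\D(\Mod(\mmod R))$, and push them into $\D_R(\Mod(\mmod R))$, where $\X_0$ and $\X_1$ become zero. The two triangles then force $\X\cong\CK\cong(-,\nu(\X))=\phi(\bar\nu(\X))$, the composite being induced by $\eta_\X$; naturality of $\eta$ upgrades this to a natural isomorphism $\id_{\D_R(\Mod(\mmod R))}\cong\phi\circ\bar\nu$.

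The main obstacle is the step "short exact sequence of complexes $\Rightarrow$ distinguished triangle", which is not legitimate in a homotopy category. I would circumvent it exactly as above, by routing through the honest derived category $\D(\Mod(\mmod R))$ via the inclusion $\K_{\ac}\subseteq\K_{R\mbox{-}\ac}$, realising $\D_R(\Mod(\mmod R))$ as a further localisation so that short exact sequences do become triangles there. The second delicate point, naturality of the four term sequence, is handled by identifying its middle morphism with the adjunction unit $\eta$, which simultaneously turns the degreewise isomorphism into an isomorphism of functors. With both $\bar\nu\circ\phi\cong\id$ and $\phi\circ\bar\nu\cong\id$ established, $\bar\nu$ is the asserted triangle equivalence.
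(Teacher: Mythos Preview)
Your argument is correct and in fact somewhat cleaner than the paper's. Both proofs rest on the same core input, Lemma~\ref{ExSeqCom}, but you exploit it differently. The paper defines only the functor $\eta=\phi$ (your notation) and then verifies directly, by manipulating roofs, that it is faithful, full and dense; each of these three checks invokes the four-term sequence separately. You instead construct the quasi-inverse $\bar\nu$ explicitly, using the pleasant identification $\nu\cong\mathrm{ev}_R$ (which the paper never states), and reduce everything to the single observation that the unit $\X\to(-,\nu(\X))$ becomes an isomorphism in $\D_R(\Mod(\mmod R))$ because its kernel and cokernel complexes vanish on $R$. Your handling of the one genuine subtlety---that short exact sequences of complexes give triangles only in the derived category, not the homotopy category---by factoring $\D_R$ as a further Verdier quotient of $\D(\Mod(\mmod R))$ is exactly right, and circumvents the roof calculus entirely. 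The paper's approach has the minor advantage of giving the equivalence without naming its inverse; yours has the advantage of exhibiting both functors and making the role of the adjunction $(\nu,U)$ transparent.
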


\begin{proof}
Define the functor $\eta: \D(\Mod R) \lrt \D_{R}(\Mod (\mmod R))$ by the attachment $\eta(\X)=(-,\X)$. Also, $\eta$ maps every roof $\xymatrix{\X  & \ar[l]_s\BZ \ar[r]^f & \Y }$ in $\D(\Mod R)$ to the roof
$$\xymatrix{ (-,\X) & \ar[l]_{(-,s)} (-,\BZ) \ar[r]^{(-,f)}& (-,\Y) }$$ in $\D_R(\Mod (\mmod R))$. Observe that since ${\rm cone}(s)$ belongs to $\K_{\ac}(\Mod R)$, ${\rm cone}((-,s))$ belongs to $\K_{R\mbox{-} \ac}(\Mod (\mmod R))$.

We claim that $\eta$ is faithful, full, dense and also a triangle functor. Let $\xymatrix{\X & \ar[r]^f\BZ\ar[l]_s  & \Y }$ be a roof in $\D(\Mod R)$ such that the induced roof $\xymatrix{(-, \X) & (-, \BZ) \ar[l]_{(-,s)} \ar[r]^{(-,f)}& (-,\Y) }$ is zero in $\D_{R}(\Mod (\mmod R))$. So, there is a morphism $T: {\bf F} \lrt (-,\BZ) $ such that ${\rm cone}(T) \in \K_{R\mbox{-} \ac}(\Mod (\mmod R))$ and $ (-, f) \circ T =0$ in $\K(\Mod (\mmod R))$. In view of Lemma \ref{ExSeqCom}
there exists an exact sequence
\[\xymatrix{ 0 \ar[r] & {\bf F}_0 \ar[r]& {\bf F} \ar[r]^{W \ \ \ \ \ \  }  & (-, \nu({\bf F})) \ar[r]  &  {\bf F}_1\ar[r] &0,}\]
such that ${\bf F}_0(R)=0 ={\bf F}_1(R)$ and $\nu({\bf F})$ is a complex whose $i$-th degree is $\nu(F^i)$. Since,  $W(R): {\bf F}(R) \lrt \nu({\bf F})$ is an isomorphism, there is a map $\nu({\bf F}) \st{W(R)^{-1}} \lrt {\bf F}(R) \st{T(R)} \lrt \BZ$ with acyclic cone, such that $(T\circ X^{-1})(R) \circ f=0$ in $\K(\Mod R)$.  Hence, the roof  $\xymatrix{\X & \BZ \ar[l]_s\ar[r]^f & \Y }$ is zero in $\D(\Mod R)$. Thus, $\eta$ is faithful.

To see that $\eta$ is full, let $$\xymatrix{(-,\X)  & {\bf H} \ar[r]^f \ar[l]_s & (-, \Y) }$$ be a roof in $\D_R(\Mod (\mmod R))$. By Lemma \ref{ExSeqCom}, there exist an exact sequence
$$ 0 \lrt {\bf H}_0 \lrt {\bf H} \st{\varphi}\lrt (-,\nu({\bf H})) \lrt {\bf H}_1 \lrt 0,$$
where ${\bf H}_0$ and ${\bf H}_1$ are complexes over $\Modd (\mmod R)$ and $\nu({\bf H})$ is a complex whose $i$-th degree is $\nu(H^i)$. By Lemma \ref{Hom0}, $\Ext^j({\bf H}_0 , (-, {\bf C})) =0 =\Ext^j({\bf H}_1, (-,{\bf C}))$ for $j=0, 1$ and every complex ${\bf F}$ over $\Modd (\mmod R)$. So, by applying the Hom functors $\Hom(-, (-, \X))$ and $\Hom (-, (-,\Y))$ on the above exact sequence, respectively, we have isomorphisms
$$ \Hom({\bf H}, (-, {\bf X})) \cong \Hom((-, \nu({\bf H})), (-, \X)) \ \ \text{and}$$
$$ \Hom({\bf H}, (-, \Y)) \cong \Hom ((-, \nu({\bf H})), (-,\Y)).$$
Therefore, there are morphisms $\varphi_{\X}: (-, \nu({\bf H})) \lrt (-, \X)$ and $\varphi_{\Y}: (-, \nu({\bf H})) \lrt (-,\Y)$  such that $\varphi_{\X} \circ \varphi=s$ and $\varphi_{\Y}\circ \varphi=f$. Note that since ${\rm cone}(s) \in \K_{R\mbox{-} \ac}(\Mod (\mmod R))$ and $\varphi(R)$ is an isomorphism, ${\rm cone}(\varphi_{\X})$ belongs to $\K_{R\mbox{-} \ac}(\Mod (\mmod R))$.

Now, the commutative diagram
\[\xymatrix@C-0.8pc@R-0.5pc{ & & {\bf H} \ar[dr]^{\varphi} \ar[dl]_{\rm id}& & \\
 & {\bf H} \ar[dl]_s \ar[drrr]^{ f  \ \ } && (-, \nu({\bf H}))\ar[dr]^{\varphi_{\Y}} \ar[dlll]_{\varphi_{\X}} &  \\ (-, \X)  & & & & (-,\Y)   }\]
implies that the roof $\xymatrix{ (-,\X)  & {\bf H} \ar[r]^f \ar[l]_{\ \ \ s } & (-, \Y) }$ is equivalent to the roof $$\xymatrix{(-,\X)  & (-, \nu({\bf H}))\ar[r]^{\varphi_{\X}} \ar[l]_{  \varphi_{\Y}}& (-, \Y) } $$ in $\D_R(\Mod (\mmod R))$.

Moreover, let $\X$ be a complex in $\D_{R}(\Mod (\mmod R))$. Then an exact sequence
$$0 \lrt \X_0 \lrt \X \lrt (-, \nu(\X)) \lrt \X_1 \lrt 0$$
implies that $\X$ is isomorphic to $(-, \nu(\X))$ in $\D_{R}(\Mod (\mmod R))$  and so $\eta$ is dense.

Finally, it is obvious that $\eta$ is a triangle functor.
\end{proof}

Set $\D_0^* (\Mod (\mmod R)):= \frac{\K^*_{R\mbox{-} \ac}(\Mod (\mmod R))}{\K^*_{\ac}(\Mod (\mmod R))}$ and $\D_0^* (\mmod (\mmod R)):= \frac{\K^*_{R \mbox{-} \ac}(\mmod (\mmod R))}{\K^*_{\ac}(\mmod (\mmod R))}$, where $* \in \{ {\rm blank}, -, \bb\}$. It follows from \cite[Corollaire 4-3]{V2} that there is the following equivalences of triangulated categories
$$ \D^*_{R}(\Mod (\mmod R)) \simeq \frac{\D^*(\Mod (\mmod R))}{\D_0^* (\Mod (\mmod R))},$$
$$ \D^*_{R}(\mmod (\mmod R)) \simeq \frac{\D^*(\mmod (\mmod R))}{\D_0^* (\mmod (\mmod R))}.$$

Following corollary is a derived version of Auslander's formula for $\Mod (\mmod R)$.

\begin{scorollary}
There is a commutative diagram
\[ \xymatrix{ \D(\Mod R) \ar[rr]^{\st{\eta}\sim} && \frac{\D(\Mod (\mmod R))}{\D_0(\Mod (\mmod R))} \\
\D^-(\Mod R)\ar@{^(->}[u] \ar[rr]^{\sim} && \frac{\D^-(\Mod (\mmod R))}{\D^-_0(\Mod (\mmod R))} \ar@{^(->}[u]\\
\D^{\bb}(\Mod R) \ar[rr]^{\sim} \ar@{^(->}[u] && \frac{\D^{\bb}(\Mod (\mmod R))}{\D^{\bb}_0(\Mod (\mmod R))} \ar@{^(->}[u] }\]
of triangulated categories whose rows are triangle equivalences.
\end{scorollary}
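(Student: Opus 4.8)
The plan is to realise all three horizontal arrows as restrictions of the single functor $\eta\colon\X\mapsto(-,\X)$ of Theorem \ref{Thm1}, and then to read commutativity of the diagram off the fact that the same rule defines every row. For the top row, I would compose the triangle equivalence $\eta\colon\D(\Mod R)\lrt\D_{R}(\Mod(\mmod R))$ of Theorem \ref{Thm1} with the Verdier equivalence $\D_{R}(\Mod(\mmod R))\simeq\frac{\D(\Mod(\mmod R))}{\D_0(\Mod(\mmod R))}$ recorded above via \cite[Corollaire 4-3]{V2}.

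Next I would obtain the rows for $*\in\{-,\bb\}$ by re-running the proof of Theorem \ref{Thm1} while tracking boundedness at each step. The key point is that every construction used there is degreewise and hence respects the boundedness class: $\eta(\X)=(-,\X)$ carries $(-,X^i)$ into degree $i$, so a complex bounded above (resp.\ bounded) is sent to one that is again bounded above (resp.\ bounded), and thus $\eta$ restricts to $\eta^*\colon\D^*(\Mod R)\lrt\D^*_{R}(\Mod(\mmod R))$. Faithfulness, fullness and density in Theorem \ref{Thm1} all pass through the four-term exact sequence $0\lrt\X_0\lrt\X\lrt(-,\nu(\X))\lrt\X_1\lrt0$ of Lemma \ref{ExSeqCom}, which is manufactured degreewise from Proposition \ref{Aus-ExtSeq}; hence if $\X$ is concentrated in degrees bounded above (resp.\ in a bounded range) then so are $\X_0$, $\X_1$, and---since $\nu$ is exact and $\nu(0)=0$---the complex $\nu(\X)$ and therefore $(-,\nu(\X))$. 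The vanishing of Lemma \ref{Hom0}$(ii)$ holds over all of $\C(\Mod(\mmod R))$, so the faithfulness and fullness arguments are unchanged, while the density argument still presents each object of $\D^*_{R}(\Mod(\mmod R))$ as isomorphic to $(-,\nu(\X))$ with $\nu(\X)\in\D^*(\Mod R)$. Composing each $\eta^*$ with the Verdier equivalence $\D^*_{R}(\Mod(\mmod R))\simeq\frac{\D^*(\Mod(\mmod R))}{\D^*_0(\Mod(\mmod R))}$ stated above then yields the remaining two rows.

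Finally I would verify commutativity. The left-hand verticals are the canonical inclusions $\D^{\bb}(\Mod R)\hookrightarrow\D^-(\Mod R)\hookrightarrow\D(\Mod R)$, and the right-hand verticals are the functors induced on Verdier quotients by $\D^{\bb}(\Mod(\mmod R))\hookrightarrow\D^-(\Mod(\mmod R))\hookrightarrow\D(\Mod(\mmod R))$; these descend to the quotients because $\D^{\bb}_0\subseteq\D^-_0\subseteq\D_0$ by the defining relation $\D^*_0=\D_0\cap\D^*$. Since every horizontal arrow is given by the one rule $\X\mapsto(-,\X)$ together with the fixed Verdier identifications, each square commutes on the nose; and as the left verticals are fully faithful and the rows are equivalences, the right verticals are automatically fully faithful, which justifies the hooked arrows in the statement.

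The main obstacle is the bookkeeping of the second step: one must make sure that nothing in the proof of Theorem \ref{Thm1} silently leaves the bounded world. The only place this could go wrong is the density step, where $\X$ is replaced by $(-,\nu(\X))$, and it is precisely the exactness of $\nu$---guaranteeing that it preserves acyclicity, hence homological boundedness, and carries bounded-above complexes to bounded-above complexes---that keeps us inside $\D^*$. A small point deserving one sentence is the distinction between literal and homological boundedness: since $\D^*(\Mod R)$ and $\D^*(\Mod(\mmod R))$ may be computed as $\K^*/\K^*_{\ac}$, one can represent every object by an honestly bounded (above) complex before applying the degreewise constructions, so no difficulty arises.
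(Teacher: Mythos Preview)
Your proposal is correct and follows essentially the same approach as the paper: the paper's proof is a one-liner asserting that the bounded-above and bounded equivalences ``follow directly from the definition of the functor $\eta$'', and what you have written is precisely the careful unpacking of that remark---namely, that $\eta(\X)=(-,\X)$ and the four-term sequence of Lemma \ref{ExSeqCom} are degreewise constructions, so they restrict to $\K^*$ and hence descend to $\D^*$. You supply the boundedness bookkeeping and the commutativity check that the paper leaves implicit, but the strategy is identical.
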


\begin{proof}
The equivalence of the first row proved in  Theorem \ref{Thm1}, while the second and third equivalences follow directly from the definition of the functor $\eta: \D(\Mod R) \lrt \D_{R}(\Mod (\mmod R))$.
\end{proof}

The same method as in the proof of Lemma \ref{ExSeqCom} and Theorem \ref{Thm1} can be applied to get the following result. The equivalence of the bottom row has already been proved by Krause \cite[Corollary 3.2]{K}, but using a different approach.

\begin{sproposition}\label{Ext-Krause}
There is a commutative diagram
\[ \xymatrix{ \D(\mmod R) \ar[rr]^{\sim} && \frac{\D(\mmod (\mmod R))}{\D_0(\mmod (\mmod R))} \\
\D^-(\mmod R)\ar@{^(->}[u] \ar[rr]^{\sim} && \frac{\D^-(\mmod (\mmod R))}{\D^-_0(\mmod (\mmod R))}\ar@{^(->}[u]\\
\D^{\bb}(\mmod R) \ar[rr]^{\sim} \ar@{^(->}[u] && \frac{\D^{\bb}(\mmod (\mmod R))}{\D^{\bb}_0(\mmod (\mmod R))} \ar@{^(->}[u] }\]
of triangulated categories whose rows are triangle equivalences.
\end{sproposition}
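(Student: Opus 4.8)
The plan is to transcribe, in the finitely presented setting, the chain of results that culminated in Theorem~\ref{Thm1}, the essential point being that right coherence of $R$ confines every construction to finitely presented objects. First I would record the basic machinery for $\mmod(\mmod R)$. Since $R$ is right coherent, $\mmod R$ is abelian, hence $\mmod(\mmod R)$ is abelian with enough projectives, and its projective objects are exactly the representable functors $(-,M)$ with $M\in\mmod R$ (a finite sum of representables is representable, and a summand of $(-,N)$ corresponds, via Yoneda, to a summand of $N\in\mmod R$, which again lies in $\mmod R$ as idempotents split there). Consequently every $F\in\mmod(\mmod R)$ admits a projective presentation $(-,M_1)\st{(-,d_1)}\lrt(-,M_0)\lrt F\lrt 0$ with $M_0,M_1\in\mmod R$, and putting $M_2:=\Ker d_1\in\mmod R$ yields the length-two projective resolution
$$0\lrt(-,M_2)\lrt(-,M_1)\st{(-,d_1)}\lrt(-,M_0)\lrt F\lrt 0,$$
which now plays the role of the special flat resolution of the large case. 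I would then set $\nu\colon\mmod(\mmod R)\lrt\mmod R$, $\nu(F)=\Coker(M_1\st{d_1}\lrt M_0)$; this lands in $\mmod R$ because $\mmod R$ is closed under cokernels, and it is exact since evaluating the presentation at $R$ gives $\nu(F)\cong F(R)$, so that $\nu$ is the (exact) evaluation functor at $R$. A functor is effaceable (i.e. $F(R)=0$, equivalently $F(P)=0$ for all $P\in\prj R$) precisely when $\nu(F)=0$, and these form a Serre subcategory of $\mmod(\mmod R)$. With this in hand, a verbatim copy of the proof of Proposition~\ref{Aus-ExtSeq} produces, for each $F\in\mmod(\mmod R)$, a four term exact sequence
$$0\lrt F_0\lrt F\lrt(-,\nu(F))\lrt F_1\lrt 0$$
with $F_0,F_1$ effaceable; abelianness of $\mmod(\mmod R)$ guarantees that the cokernels $F_0,F_1$ arising in the snake-lemma diagram are again finitely presented. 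The analogue of Remark~\ref{Hom&Ext}, namely $\Ext^i(F_j,(-,M))=0$ for $i,j\in\{0,1\}$ and all $M\in\mmod R$, goes through unchanged.

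Next I would lift these two facts to complexes, mirroring Lemma~\ref{ExSeqCom}. The only adaptation is in the identification of projective complexes: over $\mmod(\mmod R)$ a projective complex is a split exact complex of projectives, hence isomorphic to $(-,\mathbf{M})$ for a complex $\mathbf{M}\in\C(\mmod R)$ with finitely presented terms, the Yoneda embedding $U\colon\mmod R\lrt\mmod(\mmod R)$ extending to a fully faithful $\C(U)\colon\C(\mmod R)\lrt\C(\mmod(\mmod R))$. Granting this, the complex four term sequence $0\lrt\X_0\lrt\X\lrt(-,\nu(\X))\lrt\X_1\lrt 0$ (with $\X_0,\X_1$ over the effaceable subcategory and $\nu(\X)$ the degreewise complex over $\mmod R$) and the vanishing $\Ext^j(\mathbf{F},(-,\mathbf{C}))=0$ ($j=0,1$, $\mathbf{F}$ over the effaceable subcategory, $\mathbf{C}\in\C(\mmod(\mmod R))$, the last using $\mathbf{F}(R)=0$) are established exactly as before. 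I would then define $\eta\colon\D(\mmod R)\lrt\D_R(\mmod(\mmod R))$ by $\eta(\X)=(-,\X)$ on objects and by applying $(-,{-})$ to roofs, where $\D_R(\mmod(\mmod R))=\K(\mmod(\mmod R))/\K_{R\mbox{-}\ac}(\mmod(\mmod R))$, and show $\eta$ is a faithful, full, dense triangle functor by repeating the three arguments of Theorem~\ref{Thm1}: faithfulness and fullness use the complex four term sequence together with the $\Ext$-vanishing, while density uses that each $\X$ becomes isomorphic to $(-,\nu(\X))$ in the quotient. Because both $\eta$ and $\nu$ visibly preserve (homological) boundedness, the same functor restricts to triangle equivalences $\D^*(\mmod R)\simeq\D^*_R(\mmod(\mmod R))$ for every $*\in\{\text{blank},-,\bb\}$.

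Finally, combining these with the already recorded identification $\D^*_R(\mmod(\mmod R))\simeq\D^*(\mmod(\mmod R))/\D^*_0(\mmod(\mmod R))$, coming from Verdier's \cite[Corollaire 4-3]{V2}, yields the three horizontal equivalences; the vertical inclusions commute with them because $\eta$ is defined uniformly by $(-,{-})$, so the boundedness filtrations on source and target correspond under it. The main obstacle, and the only genuine difference from the large case, is the control of finiteness: one must verify that $\nu(F)$ stays in $\mmod R$, that the auxiliary functors $F_0,F_1,\X_0,\X_1$ remain in $\mmod(\mmod R)$, and that the projectives entering the complex-level $\Ext$ computation are the representables of finitely presented modules. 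All three are consequences of right coherence of $R$ (which makes $\mmod R$ abelian and $\mmod(\mmod R)$ abelian with enough projectives), so this hypothesis is doing the essential work throughout.
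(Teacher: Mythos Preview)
Your proposal is correct and follows essentially the same approach as the paper, which explicitly states that ``the same method as in the proof of Lemma~\ref{ExSeqCom} and Theorem~\ref{Thm1} can be applied'' without providing further details. You have spelled out precisely this transcription, correctly identifying that right coherence of $R$ is what keeps every construction (the value $\nu(F)$, the auxiliary functors $F_0,F_1$, and the projective resolutions) inside the finitely presented world; in particular your observation that the projective objects of $\mmod(\mmod R)$ are the representables $(-,M)$ with $M\in\mmod R$, replacing the special flat resolutions of the large case by honest projective resolutions, is exactly the adaptation needed.
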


\subsection{Covariant functors}\label{Covariant functors}
Gruson and Jensen \cite{GJ} characterized the injective objects of $\Mod(\mmod R)^{\op}$ as the functors isomorphic to those of the form $- \otimes_R M$, where $M$ is a pure-injective $R^{\op}$-module. Moreover, it is known \cite[Theorem B.16]{JL} that a covariant functor $F$ in $\Mod (\mmod R)^{\op}$ is fp-injective if and only if $F \cong -\otimes_R M$, for some (left) $R$-module $M$. Recall that a functor $F$ in $\Mod (\mmod R)^{\op}$ is fp-injective, if $\Ext^1_R(G,F)=0$, for all finitely presented functors $G$ in $\Mod (\mmod R)^{\op}$.
Let ${\rm fp}\mbox{-} \CI(R)^{\op}$ denote the full subcategory of $\Mod (\mmod R)^{\op}$ consisting of fp-injective objects. In fact, ${\rm fp}\mbox{-} \CI(R)^{\op} = (\mmod (\mmod R)^{\op})^\perp$.

Let $F$ be an object in $\Mod (\mmod R)^{\op}$. Then there is an injective copresentation
$$0 \lrt F \lrt - \otimes_R M \st{\varphi}\lrt -\otimes_R N$$
of $F$.
Since the functor $v: \Mod R \lrt \Mod (\mmod R)^{\op}$ defined by $v(M)= -\otimes_R M$ is full and faithful \cite[Theorem B.16]{JL}, there is a morphism $f: M \lrt N$ of $R$-modules such that $\varphi=- \otimes_R f$. If we set $L:= \Coker f$, then the above injective copresentation of $F$ can be completed to the following coresolution of $F$ by fp-injective objects
$$ 0 \lrt F \lrt -\otimes_R M \st{-\otimes f} \lrt - \otimes_R N  \lrt - \otimes_R L \lrt 0.$$
Hence, every functor $F$ in $\Mod (\mmod R)^{\op}$ admits a fp-injective coresolution of length at most 2.

On the other hand, we have the cotorsion theory $({}^\perp {\rm fp}\mbox{-} \CI(R)^{\op} , {\rm fp}\mbox{-} \CI(R)^{\op})$ in $\Mod (\mmod R)^{\op}$. Since $\mmod R$ is an essentially small category, this cotorsion theory  is cogenerated by a set. So it is complete and hence for every functor $F \in \Mod (\mmod R)^{\op}$, there exists a short exact sequence
$$ 0 \lrt F \lrt - \otimes_R M \lrt C \lrt 0,$$
where $M \in \Mod R^{\op}$ and $C \in {}^\perp {\rm fp}\mbox{-} \CI(R)^{\op}$.

A fp-injective coresolution
$$ 0 \lrt F \lrt -\otimes_R M \st{-\otimes f}\lrt -\otimes_R N \lrt -\otimes_R L \lrt 0 $$
of $F$ is called special if the image of $- \otimes f$ and $-\otimes_R L$ belong to ${}^\perp {\rm fp}\mbox{-} \CI(R)^{\op}$.
It follows from the above argument that each functor $F \in \Mod (\mmod R)^{\op}$ admits a special fp-injective coresolution of length 2.

Using this fact, in this subsection we provide a derived version of Auslander's formula for the category $\Mod (\mmod R)^{\op}$. The argument is similar, rather dual, to the argument we applied for the proof of Theorem \ref{Loc-Seq}. In fact, one should follow the following steps. We just give a sketch of proof for the first step. \\

\noindent {\bf Step $1$}. The functor $$v_{\centerdot}: (\Mod (\mmod R)^{\op}, \Mod R^{\op}) \lrt (\Mod R^{\op}, \Mod R^{\op}),$$ that is induced by the embedding $v: \Mod R^{\op} \lrt \Mod (\mmod R)^{\op}$, admits a right adjoint $v^{\centerdot}$ such that for all functors $F \in (\Mod R^{\op}, \Mod R^{\op})$, $v_{\centerdot}v^{\centerdot}F=F$. Moreover, if $F$ is exact, then $v^{\centerdot}F$ is an exact functor. Set $\vartheta:=v^{\centerdot}i: \Mod (\mmod R)^{\op} \lrt \Mod R^{\op}$, where $i: \Mod R^{\op} \lrt \Mod R^{\op}$.\\

\noindent {\bf Sketch of the proof of Step $1$.} Let $F:\Mod R^{\op} \lrt \mathcal{D}$ be a covariant functor. Define
$$ v^{\centerdot}F(T):= \Ker (T(M^0) \st{T(d^0)} \lrt T(M^1)),$$
where $0 \lrt T \lrt - \otimes_R M^0 \st{-\otimes d^0}\lrt -\otimes_RM^1 \st{-\otimes d^1}\lrt -\otimes_RM^2 \lrt 0 $ is a special fp-injective coresolution of $T$. Also, if there is a morphism $f: T \lrt T'$ of functors, it can be lifted to a morphism of their special fp-injective coresolutions and so we have a morphism $v^{\centerdot}F(f): v^{\centerdot}F(T) \lrt v^{\centerdot}F(T')$. Now, one can easily see that $v^\centerdot$ is the right  adjoint of $v_\centerdot$.\\

\noindent {\bf Step $2$}. Let ${\rm Mod}^0 \mbox{-} (\mmod R)^{\op}$ be the full subcategory of $\Mod(\mmod R)^{\op}$ formed by all functors $F$ with the property that $\vartheta(F)=0$. Since $\vartheta$ is an exact functor, ${\rm Mod}^0 \mbox{-}(\mmod R)^{\op}$ is a Serre subcategory of $\Mod (\mmod R)^{\op}$. Note that, by definition of the functor $v^{\centerdot}$, a functor $F \in \Mod (\mmod R)^{\op}$ belongs to ${\rm Mod}^0 \mbox{-} (\mmod R)^{\op}$ if and only if $F(R)=0$.\\

\noindent {\bf Step $3$}. For each functor $F$ in $\Mod(\mmod R)^{\op}$ there is a unique exact sequence
$$0 \lrt F_0 \lrt - \otimes \vartheta(F) \lrt F \lrt F_1 \lrt 0$$
such that $F_0, F_1 \in {\rm Mod}^0 \mbox{-}(\mmod  R)^{\op}$.\\

\noindent {\bf Step $4$}. $\Hom(- \otimes_R M, F)=0$, for all $M \in \Mod R$ and $F \in {\rm Mod}^0 \mbox{-} (\mmod R)^{\op}$.\\

Based on the above facts, similar to the proof of Theorem \ref{Loc-Seq}, one deduce that $(v,\vartheta)$ is an adjoint pair. So, we have the following colocalisation sequence of abelian categories
\[ \xymatrix@C=0.5cm@R=0.5cm{ {\rm Mod}^0 \mbox{-} (\mmod R)^{\op} \ar[rrr]  &&& \Mod (\mmod R)^{\op}  \ar[rrr]^{\vartheta}  \ar@/_1.5pc/[lll] &&& \Mod R^{\op}.  \ar@/_1.5pc/[lll]_{v} }\]
This in turn, implies that there is an equivalence $\Mod R^{\op}\simeq \frac{\Mod (\mmod R)^{\op}}{{\rm Mod}^0 \mbox{-} (\mmod R)^{\op}}$ of abelian categories.\\

On the other hand, since ${\rm Mod}^0 \mbox{-}(\mmod R)^{\op}$ is closed under direct sums, by \cite[Theorem 15.11]{F}, ${\rm Mod}^0 \mbox{-} (\mmod R)^{\op}$ is localizing, that is the quotient functor $q: \Mod (\mmod R)^{\op} \lrt \frac{\Mod (\mmod R)^{\op}}{{\rm Mod}^0 \mbox{-} (\mmod R)^{\op}}$ admits a right adjoint. Thus there exists the following localization sequence of abelian categories
\[ \xymatrix@C=0.5cm@R=0.5cm{ {\rm Mod}^0 \mbox{-} (\mmod R)^{\op} \ar[rrr]  &&& \Mod (\mmod R)^{\op}  \ar[rrr] \ar@/^1.5pc/[lll]  &&& \frac{\Mod (\mmod R)^{\op}}{{\rm Mod}^0 \mbox{-} (\mmod R)^{\op}}. \ar@/^1.5pc/[lll] }\]

\vspace{0.3cm}

So we have proved the following result.

\begin{sproposition}\label{Prop-cov}
Let $R$ be a right coherent ring. Then there is a recollement
\[ \xymatrix@C=0.5cm@R=0.5cm{ {\rm Mod}^0 \mbox{-} (\mmod R)^{\op} \ar[rrr]  &&& \Mod (\mmod R)^{\op}  \ar[rrr]^{\vartheta} \ar@/^1.5pc/[lll] \ar@/_1.5pc/[lll] &&& \Mod R^{\op} \ar@/^1.5pc/[lll] \ar@/_1.5pc/[lll]_{v} }\]

\vspace{0.3cm}

of abelian categories. In particular, $\Mod R^{\op}\simeq \frac{\Mod (\mmod R)^{\op}}{{\rm Mod}^0 \mbox{-} (\mmod R)^{\op}}$.
\end{sproposition}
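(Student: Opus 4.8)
The plan is to build the recollement from two complementary halves that share the inclusion $i_*$ of ${\rm Mod}^0 \mbox{-}(\mmod R)^{\op}$ and the functor $j^*=\vartheta$: a \emph{colocalization} sequence supplying the left adjoints $i^*$ and $j_!=v$, and a \emph{localization} sequence supplying the right adjoints $i^!$ and $j_*$. The two halves will glue into a recollement precisely because $\im i_*=\Ker\vartheta={\rm Mod}^0 \mbox{-}(\mmod R)^{\op}$ by Step $2$.

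First I would produce the colocalization sequence, dualizing the computation in the proof of Theorem \ref{Loc-Seq}. Splitting the four-term exact sequence of Step $3$,
$$0 \lrt F_0 \lrt -\otimes_R \vartheta(F) \lrt F \lrt F_1 \lrt 0,$$
into two short exact sequences and applying $\Hom(-\otimes_R M,-)$, the terms $\Hom(-\otimes_R M,F_0)$, $\Hom(-\otimes_R M,F_1)$ and $\Ext^1(-\otimes_R M,F_0)$ all vanish by Step $4$ together with the dual of Remark \ref{Hom&Ext}; what remains is the natural isomorphism
$$\Hom(-\otimes_R M, F)\cong \Hom(-\otimes_R M, -\otimes_R\vartheta(F))\cong \Hom_{R^{\op}}(M,\vartheta(F)),$$
the last step using that $v$ is full and faithful. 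Hence $(v,\vartheta)$ is an adjoint pair with $v$ fully faithful, and since $\Ker\vartheta={\rm Mod}^0 \mbox{-}(\mmod R)^{\op}$, the dual of Lemma 2.1 of \cite{K97} yields the colocalization sequence — equipping $i_*$ with a left adjoint $i^*$ and $\vartheta$ with the left adjoint $j_!=v$ — as well as the equivalence $\Mod R^{\op}\simeq\frac{\Mod (\mmod R)^{\op}}{{\rm Mod}^0 \mbox{-}(\mmod R)^{\op}}$.

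Next I would produce the localization sequence. By Step $2$, ${\rm Mod}^0 \mbox{-}(\mmod R)^{\op}$ is the Serre subcategory of all $F$ with $F(R)=0$, and since evaluation at $R$ commutes with coproducts it is closed under arbitrary direct sums. As $\Mod (\mmod R)^{\op}$ is a Grothendieck category, \cite[Theorem 15.11]{F} shows this subcategory is localizing, so the quotient functor $q$ acquires a right adjoint and, by the same token, the inclusion $i_*$ acquires a right adjoint $i^!$. Transporting along the equivalence $\frac{\Mod (\mmod R)^{\op}}{{\rm Mod}^0 \mbox{-}(\mmod R)^{\op}}\simeq\Mod R^{\op}$ of the previous step, under which $q$ corresponds to $\vartheta$, this endows $\vartheta$ with a fully faithful right adjoint $j_*$.

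Finally I would verify Definition \ref{Def-Rec} for the six functors $i_*=i_!,\,i^*,\,i^!,\,j^*=j^!=\vartheta,\,j_!=v,\,j_*$: the adjoint pairs are exactly those constructed above; $i_*$ is an inclusion and $v=j_!$ and $j_*$ are fully faithful (the latter being the right adjoint in a localization); and $\im i_*=\Ker\vartheta$ holds by Step $2$. The hardest part will not be any single verification but ensuring the two halves are compatible, i.e. that the one inclusion $i_*$ and the one functor $\vartheta$ serve simultaneously as the right adjoint of $v$ for the colocalization and as the quotient functor for the localization; this is exactly what the equivalence $\Mod R^{\op}\simeq\frac{\Mod (\mmod R)^{\op}}{{\rm Mod}^0 \mbox{-}(\mmod R)^{\op}}$ guarantees. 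I would expect the $\Ext^1$-vanishing feeding the adjunction computation — the dual of Remark \ref{Hom&Ext}, of which only the $\Hom$ statement is recorded as Step $4$ — to be the most delicate ingredient to nail down.
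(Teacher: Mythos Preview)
Your proposal is correct and follows essentially the same route as the paper: establish the colocalization half by deducing the adjunction $(v,\vartheta)$ from the four-term sequence of Step~3 together with the vanishing of $\Hom$ and $\Ext^1$ (the dual of Remark~\ref{Hom&Ext}), then obtain the localization half from closure of ${\rm Mod}^0\mbox{-}(\mmod R)^{\op}$ under direct sums via \cite[Theorem 15.11]{F}, and glue. Your observation that the $\Ext^1$-vanishing is needed but only the $\Hom$ part is recorded as Step~4 is accurate; the paper handles this implicitly by invoking ``similar to the proof of Theorem~\ref{Loc-Seq}'', which in turn rests on both parts of Remark~\ref{Hom&Ext}.
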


Set $\D^0(\Mod(\mmod R)^{\op})$ to be the quotient category $\frac{\K_{R\mbox{-} \rm ac}(\Mod (\mmod R)^{\op})}{\K_{\ac}(\Mod (\mmod R)^{\op})}$. We also may follow the same argument, as in the case for contravariant functors and prove the following result. So we do not include a proof.

\begin{stheorem}
Let $R$ be a right coherent ring. Then there exists the following equivalence
$$ \D(\Mod R^{\op}) \simeq \frac{\D(\Mod (\mmod R)^{\op})}{\D^0(\Mod (\mmod R)^{\op})}$$
of triangulated categories.
\end{stheorem}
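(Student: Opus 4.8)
The plan is to establish a triangle equivalence
$$\D(\Mod R^{\op}) \simeq \D^0(\Mod (\mmod R)^{\op})$$
that is completely dual to the equivalence of Theorem \ref{Thm1}, and then apply Verdier's localization \cite[Corollaire 4-3]{V2} to rewrite the target as $\frac{\D(\Mod (\mmod R)^{\op})}{\D^0(\Mod (\mmod R)^{\op})}$ in the form stated. Here $\D^0(\Mod (\mmod R)^{\op})$ denotes the quotient $\frac{\K(\Mod (\mmod R)^{\op})}{\K_{R\mbox{-} \rm ac}(\Mod (\mmod R)^{\op})}$, where $\K_{R\mbox{-} \rm ac}(\Mod (\mmod R)^{\op})$ consists of complexes $\X$ for which $\X(R)$ is acyclic. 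The comparison functor is the covariant analogue of $\eta$, namely the functor
$$\zeta: \D(\Mod R^{\op}) \lrt \D^0(\Mod (\mmod R)^{\op})$$
sending a complex $\X$ of left $R$-modules to the complex $-\otimes_R \X$, obtained by applying the fully faithful embedding $v: \Mod R^{\op} \lrt \Mod (\mmod R)^{\op}$, $v(M) = -\otimes_R M$, degreewise. As in the proof of Theorem \ref{Thm1}, a quasi-isomorphism $s$ has $-\otimes_R s$ landing in $\K_{R\mbox{-} \rm ac}$, so $\zeta$ is well defined on roofs.

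First I would record the complex version of Step $3$ and Step $4$: for any complex $\X \in \C(\Mod (\mmod R)^{\op})$ there is an exact sequence of complexes
$$0 \lrt \X_0 \lrt -\otimes \vartheta(\X) \lrt \X \lrt \X_1 \lrt 0,$$
with $\X_0, \X_1$ complexes over ${\rm Mod}^0 \mbox{-} (\mmod R)^{\op}$ and $\vartheta(\X)$ the complex whose $i$-th term is $\vartheta(X^i)$; and moreover $\Ext^j(-\otimes_R {\bf C}, {\bf F})=0$ for $j=0,1$, every ${\bf C} \in \C(\Mod R^{\op})$ and every ${\bf F}$ over ${\rm Mod}^0 \mbox{-} (\mmod R)^{\op}$. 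Both are obtained exactly as in Lemma \ref{ExSeqCom}, using the special fp-injective coresolutions in place of special flat resolutions and reversing all arrows; the uniqueness of the induced differentials assembles the degreewise Step $3$ sequences into a sequence of complexes, and the $\Hom$-vanishing reduces to the observation that a chain map $-\otimes_R {\bf C} \lrt {\bf F}$ is determined by its effect on $R$, which is zero because ${\bf F}(R)$ is the zero complex.

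Next I would verify that $\zeta$ is faithful, full and dense, dualizing the three corresponding parts of the proof of Theorem \ref{Thm1}. For density, the four-term sequence above shows every $\X \in \D^0(\Mod (\mmod R)^{\op})$ is isomorphic to $-\otimes \vartheta(\X)$ in the quotient, since $\X_0$ and $\X_1$ are $R$-acyclic and hence vanish there. For fullness one resolves an arbitrary roof $\xymatrix{-\otimes\X & {\bf H}\ar[l]_s\ar[r]^f & -\otimes\Y}$ by replacing ${\bf H}$ with $-\otimes\vartheta({\bf H})$, using the $\Ext$-vanishing to lift $s$ and $f$ through the canonical map ${\bf H} \lrt -\otimes\vartheta({\bf H})$ and checking that the resulting cone stays in $\K_{R\mbox{-} \rm ac}$ because $\vartheta$ applied to this map is an isomorphism on $R$. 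Faithfulness follows the same template: a roof whose image is zero in the quotient is killed by a morphism with $R$-acyclic cone, and pushing through $\vartheta$ produces the witness showing the original roof is already zero in $\D(\Mod R^{\op})$.

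The step I expect to be the main obstacle is Step $1$, the construction of the right adjoint $v^{\centerdot}$ and the verification that the assignment is independent of the chosen special fp-injective coresolution; all subsequent steps are formal consequences once this functor and the four-term sequence are in hand. Because the construction uses \emph{co}resolutions rather than resolutions, the well-definedness and functoriality require the dual comparison theorem for fp-injective coresolutions together with the completeness of the cotorsion theory $({}^\perp {\rm fp}\mbox{-} \CI(R)^{\op}, {\rm fp}\mbox{-} \CI(R)^{\op})$ already established above, and one must take care that the kernel defining $v^{\centerdot}F(T)$ behaves correctly under the induced maps. Once Step $1$ is settled, Steps $2$--$4$ and the equivalence $\zeta$ proceed by straightforward dualization of Subsection \ref{Contravariant functor}, which is why the authors omit the details.
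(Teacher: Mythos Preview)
Your overall strategy---dualize the proof of Theorem~\ref{Thm1} and then invoke Verdier's \cite[Corollaire 4-3]{V2}---is exactly what the paper intends, and your outline of the complex-level four-term sequence and the faithfulness/fullness/density verifications follows the template correctly. However, there is a genuine notational collision that makes your opening display literally false: you write ``Here $\D^0(\Mod(\mmod R)^{\op})$ denotes the quotient $\K(\Mod(\mmod R)^{\op})/\K_{R\mbox{-}\ac}(\Mod(\mmod R)^{\op})$,'' but the paper has already defined $\D^0(\Mod(\mmod R)^{\op})$ as $\K_{R\mbox{-}\ac}(\Mod(\mmod R)^{\op})/\K_{\ac}(\Mod(\mmod R)^{\op})$, the covariant analogue of $\D_0$ rather than of $\D_R$. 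The category you actually want to prove equivalent to $\D(\Mod R^{\op})$ is the analogue of $\D_R$, namely $\K(\Mod(\mmod R)^{\op})/\K_{R\mbox{-}\ac}(\Mod(\mmod R)^{\op})$; Verdier then identifies this with $\D(\Mod(\mmod R)^{\op})/\D^0(\Mod(\mmod R)^{\op})$ in the paper's sense. Your argument is correct once you keep the two quotients straight, but as written you are using the symbol $\D^0$ for two different categories in the same paragraph.

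A smaller point: in the fullness step you speak of lifting $s$ and $f$ ``through the canonical map $\mathbf{H}\to -\otimes\vartheta(\mathbf{H})$,'' but by Step~3 the canonical map goes the other way, $\varphi\colon -\otimes\vartheta(\mathbf{H})\to\mathbf{H}$. This actually makes the covariant fullness argument \emph{easier} than its contravariant counterpart: you simply precompose $s$ and $f$ with $\varphi$ to obtain the replacement roof, and then use the full faithfulness of $\C(v)$ to recognize the resulting maps as $-\otimes g_{\X}$ and $-\otimes g_{\Y}$. No $\Ext$-lifting is required at this particular point; the $\Ext$-vanishing enters in assembling the degreewise sequences into a sequence of complexes and in the faithfulness step, just as in Lemma~\ref{ExSeqCom}.
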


\section{Recollements involving  $\D(\Mod R)$}\label{Section 4}
Let $R$ be a right coherent ring. As  applications of our results in  Section \ref{Section 3}, here we provide recollements of homotopy category of pure-projective and homotopy category of pure-injective $R$-modules. These recollements are mixing together the pure exact structure with the usual exact structure.

\s A complex $\X \in \K(\Mod R)$ is called pure-exact if for every module $M \in \mmod R$, the induced complex $\Hom(M, \X)$ is exact.
Let $\K_{\rm pac}(\Mod R)$ denote  the full subcategory of $\K(\Mod R)$ formed by all pure-exact complexes. The pure derived category $\D_{\rm pur}(\Mod R)$, is the derived category with respect to the pure exact structure and so is the Verdier quotient $\K(\Mod R) / \K_{\rm pac}(\Mod R)$. In \cite{K12}, Krause studied this category and proved that $\D_{\rm pur}(\Mod R)$ is compactly generated with $\D_{\rm pur}(\Mod R)^{\rm c} \simeq \K^{\bb}(\mmod R)$. Moreover, he \cite[Corollary 6]{K12} proved that for a ring $R$, there exists a triangle equivalence
$$\D_{\rm pur}(\Mod R) \simeq \D(\Mod (\mmod R)).$$

\begin{remark} \label{Cot-F}

\begin{itemize}
\item [$(i)$] A functor $C \in \Mod (\mmod R)$ is called cotorsion if $\Ext^1(F,C)=0$, for all flat functors $F \in \CF(\mmod R)$. It is proved in \cite[Theorem 4]{H} that a flat functor $(-,M)$ in $\Mod(\mmod R)$ is cotorsion if and only if $M$ is a pure-injective module. Hence, the fully faithful functor $U: \Mod R \lrt \Mod (\mmod R)$ induces an equivalence ${\rm P}\Inj R \simeq {\rm Cot}\mbox{-} \CF(\mmod R)$, where ${\rm Cot\mbox{-}\CF}(\mmod R)$ denotes the full subcategory of $\CF(\mmod R)$ consisting of all cotorsion-flat functors. Moreover, the functor $U$ can be extended to the full and faithful functor $\K(U): \K(\Mod R) \lrt \K(\Mod (\mmod R))$ of triangulated categories. The above argument implies the following equivalence of triangulated categories $$\K({\rm P}\Inj R) \st{\st{\K(U)}\sim} \lrt \K({\rm Cot}\mbox{-} \CF(\mmod R)).$$
\item [$(ii)$]  As it is mentioned in \ref{ProjObj}, a functor $P$ in $\Mod (\mmod R)$ is projective if and only if $P \cong (-,M)$, for some pure-projective $R$-module $M$. So, there is an equivalence ${\rm P}\Prj R \simeq \CP(\mmod R)$ induced by the functor $U$. Furthermore, the full and faithful functor $\K(U): \K(\Mod R) \lrt \K(\Mod (\mmod R))$ restricts to an equivalence $$ \K({\rm P}\Prj R) \simeq \K(\CP(\mmod R))$$ of triangulated categories.
\item [(iii)] Let $\K_{\ac}(\CF(\mmod R))$ be the full triangulated subcategory of $\K(\CF(\mmod R))$ consisting of acyclic complexes of flat functors. We have a triangle equivalence $\K_{\pur}(\Mod R) \simeq \K_{\ac}(\CF(\mmod R))$ via the functor $\K(U)$.
\end{itemize}
\end{remark}

\begin{lemma}\label{(P,F)}
The pair $$(\K(\CP(\mmod R)), \K_{\rm ac}(\CF(\mmod R)))$$ is a stable $t$-structure in $\K(\CF(\mmod R))$. In particular, there is an equivalence
$$ \frac {\K(\CF(\mmod R))}{\K_{\rm ac}(\CF(\mmod R))}\simeq \K(\CP(\mmod R))$$
of triangulated categories.
\end{lemma}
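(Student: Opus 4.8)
The plan is to verify the three axioms of a stable $t$-structure for the pair $(\CU,\CV):=(\K(\CP(\mmod R)),\K_{\rm ac}(\CF(\mmod R)))$ in $\CT:=\K(\CF(\mmod R))$, and then to obtain the displayed equivalence from Proposition \ref{Miyachi2}$(i)$. Throughout it is convenient to transport the problem to the module side via Remark \ref{Cot-F}: the equivalence $\Mod R\simeq\CF(\mmod R)$ induces a triangle equivalence $\K(\Mod R)\simeq\CT$ under which $\CU$ corresponds to $\K({\rm P}\Prj R)$ and $\CV$ to $\K_{\pur}(\Mod R)$, the homotopy category of pure-acyclic complexes. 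Hence it is enough to see that $(\K({\rm P}\Prj R),\K_{\pur}(\Mod R))$ is a stable $t$-structure in $\K(\Mod R)$, where the three conditions can be read in the familiar language of pure-projective modules and pure-exact complexes.

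Axiom $(i)$ is immediate, as being a complex of projective functors and being acyclic are both invariant under the shift. For axiom $(iii)$ I would resolve an arbitrary $\X\in\CT$ by the projectives of the exact category $\CF(\mmod R)$. Every object of $\CF(\mmod R)$ admits an admissible epimorphism from a projective functor $(-,M)$ with $M$ pure-projective and with kernel again flat; iterating degreewise yields a Cartan--Eilenberg type resolution $P^{\bu\bu}\lrt\X$, a lower half-plane double complex of projective functors whose columns are (augmented, exact) projective resolutions of the $X^i$. Taking the coproduct totalization $\PP:=\mathrm{Tot}^{\oplus}(P^{\bu\bu})$ gives a complex of projective functors and a morphism $\PP\lrt\X$ whose cone is the totalization of the augmented double complex. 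Since this double complex has exact columns and is bounded above in the resolution direction, the acyclic assembly lemma shows the cone is acyclic; being built from coproducts of flat functors it lies in $\CV$, so we obtain the desired triangle $\PP\lrt\X\lrt\CW\rightsquigarrow$ with $\PP\in\CU$ and $\CW\in\CV$.

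The heart of the matter is axiom $(ii)$: $\Hom_{\CT}(\PP,\CW)=0$ for all $\PP\in\CU$ and $\CW\in\CV$. On the module side this says that every chain map from a complex of pure-projective modules to a pure-acyclic complex is null-homotopic, the special case $\PP=\CW$ being the assertion that a pure-acyclic complex of pure-projective modules is contractible. Two observations organise the argument. First, since $\CW$ is an acyclic complex of flat functors and $U$ is left exact, the cycles of $\CW$ are again flat and the induced conflations $0\lrt Z^i\lrt W^i\lrt Z^{i+1}\lrt 0$ are pure-exact. Second, a map out of a finitely presented module lifts along any pure epimorphism, so the pure-projectivity of the terms of $\PP$ permits one to construct a contracting homotopy for the given map by successive liftings. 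The \emph{main obstacle} is to make this construction converge in unbounded degrees: this is exactly where one must use the compactness (finite presentation) of the modules out of which pure-projectives are assembled, together with the pure-acyclicity of $\CW$, which forces $\Hom(N,\CW)$ to be exact for every $N\in\mmod R$. This compactness mechanism is the essential input, and it is what lets maps from the (possibly unbounded) complex $\PP$ be contracted stagewise.

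With the three axioms in hand, $(\CU,\CV)$ is a stable $t$-structure in $\CT$. Proposition \ref{Miyachi2}$(i)$ then provides a right adjoint $i^!$ to the inclusion $i_*\colon\CU\lrt\CT$ and a triangle equivalence $\CT/\CV\simeq\CU$ induced by $i^!$; that is,
$$\frac{\K(\CF(\mmod R))}{\K_{\rm ac}(\CF(\mmod R))}\simeq\K(\CP(\mmod R)),$$
which is the asserted ``in particular''.
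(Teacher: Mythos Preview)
Your approach is genuinely different from the paper's. The paper does not verify the three axioms directly; instead it invokes the complete cotorsion pair $(\C({\rm P}\Prj R),\C_{\pur}(\Mod R))$ in $\C(\Mod R)$ from \cite[Theorem 5.4]{St1}, uses \cite[Theorem 3.5]{BEIJR} to obtain a right adjoint $\iota_*$ to the inclusion $\K({\rm P}\Prj R)\hookrightarrow\K(\Mod R)$, observes that $\Ker\iota_*=\K_{\pur}(\Mod R)$, and reads off the stable $t$-structure from Proposition~\ref{Miyachi2}(iii). In that route the orthogonality (your axiom~(ii)) falls out of the adjunction for free, and the approximating triangle (your axiom~(iii)) comes from completeness of the cotorsion pair; both hard points are absorbed into the cited references.

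Your axiom~(ii) is where the real gap lies. You rightly note that the inductive construction of a null-homotopy has no starting point when $\PP$ is unbounded, but invoking ``compactness'' does not resolve this. Compactness of $N\in\mmod R$ yields acyclicity of $\Hom_R(N,\CW)$, hence of $\Hom_R(P,\CW)$ for each pure-projective $P$; this is exactly what makes each \emph{individual} lifting step possible, but it says nothing about assembling them globally. Bear in mind that the analogous statement for ordinary projectives and ordinary acyclic complexes is \emph{false}: over $R=k[x]/(x^2)$ the doubly infinite complex $\cdots\to R\xrightarrow{x}R\xrightarrow{x}R\to\cdots$ is an acyclic complex of projectives that is not contractible. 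So something specific to the pure setting is needed beyond term-by-term lifting. One way to complete your argument is to write $\PP$ as the homotopy colimit of its brutal truncations $\PP_{\leq n}$ (the transition maps are degreewise split monomorphisms, so the ordinary colimit computes the homotopy colimit); for each bounded-above $\PP_{\leq n}$ your downward induction does produce a null-homotopy, and the Milnor $\varprojlim^1$-sequence, together with $\Hom_{\K}(\PP_{\leq n},W[j])=0$ for all $j$, then forces $\Hom_{\K}(\PP,W)=0$. Your axiom~(iii) is essentially fine, but to guarantee that the horizontal differentials of your double complex square to zero you should use \emph{functorial} pure-projective precovers (for instance $X\mapsto\bigoplus_{N\to X}N$ over $N\in\mmod R$) rather than arbitrary lifts.
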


\begin{proof}
In view of Theorem 5.4 of \cite{St1}, there exists the complete cotorsion theory $$(\C({\rm P}\Prj R), \C_{\pur}(\Mod R))$$ in $\C(\Mod R).$ Hence, by \cite[Theorem 3.5]{BEIJR}, the inclusion functor $\iota: \K({\rm P}\Prj R) \lrt \K(\Mod R)$ has a right adjoint $\iota_*: \K(\Mod R) \lrt \K({\rm P}\Prj R)$, which is defined as follows.
Since the above  cotorsion theory is complete, for each complex $\X \in \K(\Mod R)$, there is a short exact sequence
$$ 0\lrt {\bf D} \lrt {\bf C} \lrt \X \lrt 0$$
with ${\bf D} \in \C_{\pur}(\Mod R)$ and ${\bf C} \in \C({\rm P}\Prj R)$. Then, $\iota_*(\X)$ is defined to be the complex ${\bf C}$.  It follows directly from definition that the kernel of $\iota_*$ is the homotopy category $\K_{\pur}(\Mod R)$. So, Proposition \ref{Miyachi2} (iii) yields the stable $t$-structure $(\K({\rm P}\Prj R), \K_{\pur}(\Mod R))$ in $\K(\Mod R)$. Therefore, we have the stable $t$-structure $$(\K(\CP(\mmod R)) , \K_{\ac}(\CF(\mmod R)))$$ in $\K(\CF(\mmod R))$, see Remark \ref{Cot-F}(ii)-(iii).

Let $\beta: \K(\CF(\mmod R)) \lrt \K(\CP(\mmod R))$ be a triangle functor that makes the following diagram commutative
\[\xymatrix{ \K(\Mod R) \ar[r]^{\iota_*} \ar[d]^{\wr}_{\K(U)} & \K({\rm P}\Prj R)\ar[d]_{\wr}^{\K(U)} \\
\K(\CF(\mmod R)) \ar[r]^\beta & \K(\CP(\mmod R)).}\]
It can be easily checked, using the adjoint pair $(\iota , \iota_*)$, that $\beta$ is the right adjoint of the inclusion functor $\K(\CP(\mmod R)) \lrt \K(\CF(\mmod R))$.
Now, it follows from Proposition \ref{Miyachi2}(i) that the functor $\beta$ induces an equivalence
$$ \frac {\K(\CF(\mmod R))}{\K_{\rm ac}(\CF(\mmod R))}\simeq \K(\CP(\mmod R))$$
of triangulated categories.
\end{proof}

We also need the following parallel result.

\begin{lemma}\label{(F,C)}
The pair $$(\K_{\rm ac}(\CF(\mmod R)), \K({\rm Cot\mbox{-}\CF}(\mmod R)))$$ is a stable $t$-structure in $\K(\CF(\mmod R))$. In particular, there is an equivalence
$$ \frac{\K(\CF(\mmod R))}{\K_{\rm ac}(\CF(\mmod R))} \simeq \K(\CCF(\mmod R)).$$
of triangulated categories.
\end{lemma}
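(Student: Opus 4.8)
The plan is to run, in dual form, the argument of Lemma \ref{(P,F)}. By the triangle equivalence $\K(U)\colon \K(\Mod R)\lrt \K(\CF(\mmod R))$ of Remark \ref{Cot-F}, under which $\K({\rm P}\Inj R)$ is carried to $\K(\CCF(\mmod R))$ by part $(i)$ and $\K_{\pur}(\Mod R)$ to $\K_{\ac}(\CF(\mmod R))$ by part $(iii)$, the assertion is equivalent to showing that $(\K_{\pur}(\Mod R),\K({\rm P}\Inj R))$ is a stable $t$-structure in $\K(\Mod R)$. I would therefore transfer the whole problem to $\K(\Mod R)$ and work there.

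To produce this stable $t$-structure I would appeal to Proposition \ref{Miyachi2}$(ii)$: it suffices to exhibit a left adjoint $\iota^*$ of the inclusion $\iota\colon \K({\rm P}\Inj R)\lrt \K(\Mod R)$ with $\Ker\iota^*=\K_{\pur}(\Mod R)$, for then $(\Ker\iota^*,\im\iota)=(\K_{\pur}(\Mod R),\K({\rm P}\Inj R))$ is exactly the stable $t$-structure sought. The left adjoint is obtained dually to Lemma \ref{(P,F)}: in place of the cotorsion theory $(\C({\rm P}\Prj R),\C_{\pur}(\Mod R))$ one uses its pure-injective counterpart, the complete cotorsion theory $(\C_{\pur}(\Mod R),\C({\rm P}\Inj R))$ in $\C(\Mod R)$, so that every complex $\X$ fits in a short exact sequence $0\lrt \X\lrt {\bf C}\lrt {\bf D}\lrt 0$ with ${\bf C}\in\C({\rm P}\Inj R)$ and ${\bf D}\in\C_{\pur}(\Mod R)$; the dual of \cite[Theorem 3.5]{BEIJR} then supplies the left adjoint $\iota^*$ with $\iota^*(\X)={\bf C}$.

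For the kernel, the defining sequence yields in $\K(\Mod R)$ a triangle ${\bf D}[-1]\lrt \X\lrt \iota^*(\X)\rightsquigarrow$ with ${\bf D}[-1]\in\K_{\pur}(\Mod R)$. Since every map in $\K(\Mod R)$ from a pure-exact complex to a complex of pure-injectives vanishes, one has $\K_{\pur}(\Mod R)\cap\K({\rm P}\Inj R)=0$, and hence $\iota^*(\X)\simeq 0$ precisely when $\X\in\K_{\pur}(\Mod R)$; thus $\Ker\iota^*=\K_{\pur}(\Mod R)$, exactly as in Lemma \ref{(P,F)}. Transporting the resulting stable $t$-structure back along $\K(U)$ gives $(\K_{\ac}(\CF(\mmod R)),\K(\CCF(\mmod R)))$ in $\K(\CF(\mmod R))$. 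The displayed equivalence is then immediate from Proposition \ref{Miyachi2}$(i)$ with $\CU=\K_{\ac}(\CF(\mmod R))$ and $\CV=\K(\CCF(\mmod R))$, which gives $\K(\CF(\mmod R))/\K_{\ac}(\CF(\mmod R))\simeq \K(\CCF(\mmod R))$.

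I expect the only non-formal input, and hence the main obstacle, to be the construction of the complete cotorsion theory $(\C_{\pur}(\Mod R),\C({\rm P}\Inj R))$ in $\C(\Mod R)$ — the pure-injective analogue of the pair used in Lemma \ref{(P,F)}, which should again be extracted from \cite{St1} — together with the precise dualization of \cite[Theorem 3.5]{BEIJR} producing the left adjoint $\iota^*$. Once the left adjoint and the vanishing $\K_{\pur}(\Mod R)\cap\K({\rm P}\Inj R)=0$ are in hand, the kernel computation and the two applications of Proposition \ref{Miyachi2} are routine and formally dual to the proof of Lemma \ref{(P,F)}.
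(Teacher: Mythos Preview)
Your proposal is correct and follows essentially the same route as the paper: the paper's proof simply invokes the complete cotorsion theory $(\C_{\pur}(\Mod R),\C({\rm P}\Inj R))$ from \cite[Theorem 5.4]{St1}, applies \cite[Theorem 3.5]{BEIJR} to obtain the left adjoint $\iota^*$ of the inclusion $\K({\rm P}\Inj R)\hookrightarrow\K(\Mod R)$, and declares the rest ``similar'' to Lemma \ref{(P,F)}. Your write-up spells out precisely these dual steps, including the transport along $\K(U)$ and the applications of Proposition \ref{Miyachi2}$(ii)$ and $(i)$, so there is nothing to add.
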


\begin{proof}
The proof is similar to the proof of the above lemma. Just note that by Theorem 5.4 of \cite{St1}, we have the following complete cotorsion theory
$$(\C_{\pur}(\Mod R), \C({\rm P}\Inj R))$$ in $\C(\Mod R)$. So, \cite[Theorem 3.5]{BEIJR} comes to play and implies that the inclusion functor $\iota: \K({\rm P}\Inj R) \lrt \K(\Mod R)$ possess a left adjoint $\iota^*: \K(\Mod R) \lrt \K({\rm P}\Inj R)$. The rest of the proof is similar. so we leave it as an easy exercise.
\end{proof}

Next proposition follows from Corollary 5.8 of \cite{St1}. The feature of the proof presented here is that we explicitly discuss the structure of the equivalences and will apply this structure to the forthcoming results. We preface the proposition with a remark.

\begin{remark}
In view of \cite[Theorem 4.2]{St2}, there exists the complete cotorsion pair $$(\C(\CF(\mmod R)), \C(\CF(\mmod R))^\perp)$$ in $\C(\Mod (\mmod R))$. Hence, by Theorem 3.5 of \cite{BEIJR}, there is a right adjoint $$\psi: \K(\Mod (\mmod R)) \lrt \K(\CF(\mmod R))$$ of the inclusion functor $\K(\CF(\mmod R)) \lrt \K(\Mod (\mmod R))$. The functor $\psi$ is defined as follows.
Let $\X$ be a complex in $\K(\Mod (\mmod R))$. Then there is a short exact sequence
$$ 0 \rt {\bf C} \rt  {\bf F} \rt \X \rt 0$$
where ${\bf F} \in \C(\CF(\mmod R))$ and ${\bf C} \in \C(\CF(\mmod R))^\perp$. Set $\psi(\X)= {\bf F}$. It follows from definition that $\C(\CF(\mmod R))^{\perp} \subseteq \C_{\ac}(\Mod (\mmod R))$. Let $\X \in \K(\Mod (\mmod R))$ be an acyclic complex and consider the corresponding short exact sequence  $ 0 \rt {\bf C} \rt  {\bf F} \rt \X \rt 0$, with ${\bf F} \in \C(\CF(\mmod R))$ and ${\bf C} \in \C(\CF(\mmod R))^\perp$. Since ${\bf C}$ is acyclic, ${\bf F}$ is acyclic as well. So, $\psi$ maps every acyclic complex in $\K(\Mod(\mmod R))$ to an acyclic, and hence pure-exact, complex in $\K(\CF(\mmod R))$.

Therefore, $\psi$ induces a triangle functor
$$\bar{\psi}: \D(\Mod (\mmod R)) \lrt \frac{\K(\CF(\mmod R))}{\K_{\ac}(\CF(\mmod R))}.$$
Moreover, the fully faithful functor $U: \Mod R \lrt \Mod (\mmod R)$ yields the following equivalence of triangulated categories
$$\chi: \D_{\rm pur}(\Mod R) \lrt \frac{\K(\CF(\mmod R))}{\K_{\ac}(\CF(\mmod R))}.$$
It is proved in \cite[Corollary 4.8]{K12} that the functor $U$ induces an equivalence
$$\eta: \D_{\rm pur}(\Mod R) \lrt \D(\Mod (\mmod R))$$
of triangulated categories. It can be easily checked that $\chi^{-1} \circ \bar{\psi}$ is the quasi-inverse of $\eta$. So $\bar{\psi}$ is an equivalence of triangulated categories.
\end{remark}

\begin{proposition}\label{isos}
Let $R$ be a right  coherent ring. Then there are the following triangle equivalences
\begin{itemize}
\item [$(i)$] $\D(\Mod (\mmod R)) \st{\Psi} \lrt \K({\rm P}\Inj R),$
\item [$(ii)$] $\D (\Mod (\mmod R)) \st{\Phi}\lrt \K({\rm P}\Prj R).$
\end{itemize}
\end{proposition}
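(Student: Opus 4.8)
The plan is to build both equivalences purely by composition, since all the hard analytic work has already been done in Lemmas~\ref{(P,F)} and~\ref{(F,C)} and in the remark immediately preceding the proposition. The common hub of the two composites is the Verdier quotient $\frac{\K(\CF(\mmod R))}{\K_{\ac}(\CF(\mmod R))}$. The remark just above supplies a triangle equivalence $\bar\psi\colon \D(\Mod(\mmod R)) \lrt \frac{\K(\CF(\mmod R))}{\K_{\ac}(\CF(\mmod R))}$, obtained from the flat cotorsion pair on $\C(\Mod(\mmod R))$, and each of the two stable $t$-structures on $\K(\CF(\mmod R))$ identifies this same quotient with a homotopy category of small functors, which Remark~\ref{Cot-F} then transports back to pure-projective, resp.\ pure-injective, modules via $\K(U)$.

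For part $(ii)$ I would define $\Phi$ to be the composite
$$\D(\Mod(\mmod R)) \st{\bar\psi}\lrt \frac{\K(\CF(\mmod R))}{\K_{\ac}(\CF(\mmod R))} \st{\sim}\lrt \K(\CP(\mmod R)) \st{\K(U)^{-1}}\lrt \K({\rm P}\Prj R),$$
where the middle arrow is the equivalence of Lemma~\ref{(P,F)} (induced by the functor $\beta$ constructed there, coming from the stable $t$-structure $(\K(\CP(\mmod R)),\K_{\ac}(\CF(\mmod R)))$) and the last arrow is the inverse of the equivalence $\K({\rm P}\Prj R)\st{\sim}\lrt\K(\CP(\mmod R))$ of Remark~\ref{Cot-F}(ii). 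All three arrows are triangle equivalences, so $\Phi$ is one.

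For part $(i)$ I would proceed identically, routing through the other stable $t$-structure, and set
$$\D(\Mod(\mmod R)) \st{\bar\psi}\lrt \frac{\K(\CF(\mmod R))}{\K_{\ac}(\CF(\mmod R))} \st{\sim}\lrt \K(\CCF(\mmod R)) \st{\K(U)^{-1}}\lrt \K({\rm P}\Inj R),$$
the middle arrow now being the equivalence of Lemma~\ref{(F,C)} (from the stable $t$-structure $(\K_{\ac}(\CF(\mmod R)),\K(\CCF(\mmod R)))$) and the last the inverse of $\K({\rm P}\Inj R)\st{\sim}\lrt\K(\CCF(\mmod R))$ of Remark~\ref{Cot-F}(i).

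I do not expect any genuine obstacle here: as with Corollary~5.8 of \cite{St1}, the statement is formally a consequence of the ingredients already in place, and the substance of the argument is bookkeeping with the directions of the arrows. The only point requiring care — and the reason for recording an explicit construction rather than merely citing \cite{St1} — is to keep the concrete definitions of $\bar\psi$, of $\beta$ and its analogue, and of $\K(U)$ visible, so that the precise description of $\Psi$ and $\Phi$ on objects (a flat replacement, followed by passage to the cotorsion-flat or pure-projective part, followed by the identification through $U$) is available for the recollements constructed in the results that follow.
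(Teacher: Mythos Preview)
Your proposal is correct and matches the paper's own proof essentially verbatim: the paper also defines $\Psi$ and $\Phi$ as the composites $\K(U)^{-1}\circ\xi\circ\bar\psi$ and $\K(U)^{-1}\circ\beta\circ\bar\psi$, invoking Lemma~\ref{(F,C)}, Lemma~\ref{(P,F)}, Remark~\ref{Cot-F}(i)--(ii), and the equivalence $\bar\psi$ from the preceding remark. Your closing comment about recording the explicit constructions for later use likewise mirrors the paper's stated motivation.
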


\begin{proof}
$(i)$
By Lemma \ref{(F,C)} there is an equivalence
$$ \xi : \frac{\K(\CF(\mmod R))}{\K_{\rm ac}(\CF(\mmod R))} \lrt \K(\CCF(\mmod R))$$
given by $\xi(\X) = {\bf C}$, where ${\bf C}$ fits into a short exact sequence
$$0 \rt \X \rt {\bf C} \rt {\bf F} \rt 0$$
with ${\bf F} \in \C_{\ac}(\CF(\mmod R))$.

Combine the above equivalence together with the equivalence
$$\K(U)^{-1}: \K(\CCF(\mmod R)) \lrt \K(\CP(\mmod R)),$$ of part $(i)$ of Remark \ref{Cot-F},
we gain the following triangle equivalence
$$ \Psi: \D(\Mod (\mmod R)) \st{\bar{\psi}}\lrt \frac{\K(\CF(\mmod R))}{\K_{\ac}(\CF(\mmod R))} \st{\xi}\lrt
\K(\CCF(\mmod R)) \st{\K(U)^{-1}} \lrt \K({\rm P}\Inj R),$$
where $\bar{\psi}$ is the equivalence introduced in the above remark.

$(ii)$ This is similar to the proof of part $(i)$. One should apply Lemma \ref{(P,F)}, equivalence of Part $(ii)$ of Remark \ref{Cot-F},
and the above remark to get the following sequence of the equivalences
$$\Phi: \D(\Mod (\mmod R)) \st{\bar{\psi}} \lrt \frac{\K(\CF(\mmod R))}{\K_{\ac}(\CF(\mmod R))} \st{\beta} \lrt \K(\CP(\mmod R)) \st{\K(U)^{-1}}\lrt \K({\rm P}\Prj R)$$
of triangulated categories.
\end{proof}

\begin{theorem}\label{RecDer}
Let $R$ be a right coherent ring. Then the following statements hold true.
\begin{itemize}
\item [$(i)$] The equivalence $\Psi: \D(\Mod (\mmod R)) \lrt \K({\rm P}\Inj R)$ induces the following commutative diagram of recollements
\vspace{0.4cm}
\[ \xymatrix@C=0.3cm@R=0.4cm{\D_0(\Mod (\mmod R)) \ar[dd]^{\Psi|}\ar[rrr]  &&& \D(\Mod (\mmod R)) \ar[rrr] \ar[dd]^{\Psi} \ar@/_1.5pc/[lll] \ar@/^1.5pc/[lll] &&& \D_{R}(\Mod (\mmod R)) \ar[dd] \ar@/_1.5pc/[lll] \ar@/^1.5pc/[lll]
\\ \\  \K_{\ac}({\rm P}\Inj R)\ar[rrr]  &&& \K({\rm P}\Inj R)  \ar[rrr] \ar@/^1.5pc/[lll] \ar@/_1.5pc/[lll] &&& \frac{\K({\rm P}\Inj R)}{\K_{\ac}({\rm P}\Inj R)}, \ar@/^1.5pc/[lll] \ar@/_1.5pc/[lll] }\]
\vspace{0.3cm}
\\whose vertical functors are triangle equivalences. In particular, there is a recollement
\vspace{0.25cm}
\[ \xymatrix@C=0.5cm@R=0.5cm{ \K_{\ac}({\rm P}\Inj R)\ar[rrr]  &&& \K({\rm P}\Inj R)  \ar[rrr] \ar@/^1.5pc/[lll] \ar@/_1.5pc/[lll] &&& \D(\Mod R), \ar@/^1.5pc/[lll] \ar@/_1.5pc/[lll] }\]
\vspace{0.3cm}
\\of triangulated categories.
\item [$(ii)$] The equivalence $\Phi: \D(\Mod (\mmod R)) \lrt \K({\rm P}\Prj R)$ induces the following commutative diagram of recollements
\vspace{0.4cm}
\[ \xymatrix@C=0.3cm@R=0.4cm{\D_0(\Mod (\mmod R)) \ar[dd]^{\Phi|}\ar[rrr]  &&& \D(\Mod (\mmod R)) \ar[rrr] \ar[dd]^{\Phi} \ar@/_1.5pc/[lll] \ar@/^1.5pc/[lll] &&& \D_{R}(\Mod (\mmod R)) \ar[dd] \ar@/_1.5pc/[lll] \ar@/^1.5pc/[lll]
\\ \\  \K_{\ac}({\rm P}\Prj R)\ar[rrr]  &&& \K({\rm P}\Prj R)  \ar[rrr] \ar@/^1.5pc/[lll] \ar@/_1.5pc/[lll] &&& \frac{\K({\rm P}\Prj R)}{\K_{\ac}({\rm P}\Prj R)}, \ar@/^1.5pc/[lll] \ar@/_1.5pc/[lll] }\]
\vspace{0.3cm}
\\whose vertical functors are triangle equivalences. In particular, there is a recollement
\vspace{0.25cm}
\[ \xymatrix@C=0.5cm@R=0.5cm{ \K_{\ac}({\rm P}\Prj R)\ar[rrr]  &&& \K({\rm P}\Prj R)  \ar[rrr] \ar@/^1.5pc/[lll] \ar@/_1.5pc/[lll] &&& \D(\Mod R), \ar@/^1.5pc/[lll] \ar@/_1.5pc/[lll] }\]
\vspace{0.3cm}
\\of triangulated categories.
\end{itemize}
\end{theorem}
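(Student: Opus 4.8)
We prove $(i)$; part $(ii)$ is identical after replacing $\K({\rm P}\Inj R)$, $\K_{\ac}({\rm P}\Inj R)$, Lemma \ref{(F,C)} and Remark \ref{Cot-F}(i) by $\K({\rm P}\Prj R)$, $\K_{\ac}({\rm P}\Prj R)$, Lemma \ref{(P,F)} and Remark \ref{Cot-F}(ii), and $\Psi$ by $\Phi$. The plan is to build the top row as a recollement on the functor side and then transport it down along the equivalence $\Psi$ of Proposition \ref{isos}(i). The linchpin is the identification
$$\Psi\big(\D_0(\Mod (\mmod R))\big)=\K_{\ac}({\rm P}\Inj R),$$
after which $\Psi$ restricts to an equivalence of the left-hand terms and, passing to Verdier quotients, induces an equivalence $\D_{R}(\Mod (\mmod R))\simeq \K({\rm P}\Inj R)/\K_{\ac}({\rm P}\Inj R)$ of the right-hand terms, so that the whole square commutes with vertical triangle equivalences.

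To prove this identification I would chase a complex through the three functors composing $\Psi=\K(U)^{-1}\circ\xi\circ\bar{\psi}$. The relevant invariant is acyclicity after evaluation at $R$: since the representable $(-,R)$ is projective, the functor $\mathrm{ev}_R=\Hom((-,R),-)$ is exact, so a complex $\X$ lies in $\K_{R\mbox{-}\ac}(\Mod(\mmod R))$ precisely when $\X(R)$ is acyclic, and every genuinely acyclic complex is $R$-acyclic. Each of $\bar{\psi}$ and $\xi$ alters a complex by a short exact sequence one of whose outer terms is a (pure-)acyclic, hence $R$-acyclic, complex; therefore $R$-acyclicity is preserved along $\bar{\psi}$ and $\xi$. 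Finally $\K(U)^{-1}$ sends $(-,\Y)$ to $\Y$, and $(-,\Y)(R)=\Y$, so an $R$-acyclic complex of cotorsion-flat functors is carried to a genuinely acyclic complex of pure-injective modules; running the chain backwards shows the correspondence is bijective. This step is the \emph{main obstacle}, though it is technical rather than conceptual.

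It remains to exhibit the top row
$$\D_0(\Mod (\mmod R))\lrt \D(\Mod (\mmod R))\lrt \D_{R}(\Mod (\mmod R))$$
as a recollement, which I would do by producing two stable $t$-structures in $\D(\Mod(\mmod R))$ sharing the middle term $\D_0(\Mod(\mmod R))$ and invoking Proposition \ref{Miyachi}. Evaluating the four-term sequence of Proposition \ref{Aus-ExtSeq} at $R$ identifies $\nu$ with $\mathrm{ev}_R$, so $\nu$ is exact and the induced functor $\nu\colon\D(\Mod(\mmod R))\lrt\D(\Mod R)$ has kernel $\D_0(\Mod(\mmod R))=\{(-,R)\}^{\perp}$. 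Deriving the adjunction $\nu\dashv U$ of Theorem \ref{Loc-Seq} (here $\nu$ is exact and $U$, being right adjoint to an exact functor, preserves K-injectives) yields a fully faithful $\mathbb{R}U$ with left adjoint $\nu$; Proposition \ref{Miyachi2}(ii) then gives the stable $t$-structure $(\D_0(\Mod(\mmod R)),\ \im\mathbb{R}U)$. On the other hand $\D(\Mod(\mmod R))$ is compactly generated and $(-,R)$ is compact, so the localizing subcategory $\mathcal{S}=\mathrm{Loc}\langle(-,R)\rangle$ is coreflective; as $\mathcal{S}^{\perp}=\{(-,R)\}^{\perp}=\D_0(\Mod(\mmod R))$, Proposition \ref{Miyachi2}(iii) gives the stable $t$-structure $(\mathcal{S},\ \D_0(\Mod(\mmod R)))$. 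Proposition \ref{Miyachi} assembles these two into the desired recollement.

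Transporting this recollement along the equivalence $\Psi$, together with the identification of the first paragraph, produces the bottom row as a recollement and makes the displayed diagram commute with vertical triangle equivalences $\Psi|$, $\Psi$ and the induced map on quotients. The final, ``in particular'' recollement is then obtained by replacing the quotient $\K({\rm P}\Inj R)/\K_{\ac}({\rm P}\Inj R)\simeq \D_{R}(\Mod (\mmod R))$ with $\D(\Mod R)$ via the equivalence $\eta$ of Theorem \ref{Thm1}. Part $(ii)$ follows verbatim with $\Phi$ in place of $\Psi$.
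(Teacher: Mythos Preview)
Your proposal is correct and follows essentially the same architecture as the paper's proof: identify $\Psi\big(\D_0(\Mod(\mmod R))\big)=\K_{\ac}({\rm P}\Inj R)$ by tracking $R$-acyclicity through the three constituents of $\Psi$, exhibit a recollement at the level of $\D(\Mod(\mmod R))$ with middle term $\D_0(\Mod(\mmod R))$, transport it along the equivalence $\Psi$, and finish with Theorem \ref{Thm1}. The one genuine difference lies in how the top recollement is produced. The paper simply asserts the existence of the two stable $t$-structures $\big(Q({}^\perp\K_{R\mbox{-}\ac}(\Mod(\mmod R))),\,\D_0\big)$ and $\big(\D_0,\,Q(\K_{R\mbox{-}\ac}(\Mod(\mmod R))^\perp)\big)$ in $\D(\Mod(\mmod R))$ and then invokes \cite[Corollary 1.13]{IKM} to transfer and assemble. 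You instead construct the two stable $t$-structures explicitly and by distinct mechanisms---one from the derived adjunction $\nu\dashv\mathbb{R}U$ via Proposition \ref{Miyachi2}(ii), the other from compact generation by $(-,R)$ via Proposition \ref{Miyachi2}(iii)---and then apply Proposition \ref{Miyachi} directly. Your argument is more self-contained and makes visible why the two $t$-structures exist (the paper's existence claim is stated without justification), at the cost of a bit more length; the paper's version is terser but relies on an external reference for the transfer step.
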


\begin{proof}
There exist stable $t$-structures
 $$(Q({}^\perp \K_{R\mbox{-} \ac}(\Mod (\mmod R))), \frac{\K_{R\mbox{-} \ac}(\Mod (\mmod R))}{\K_{\ac}(\Mod (\mmod R))}) \ \text{and}$$ $$ (\frac{\K_{R\mbox{-} \ac}(\Mod (\mmod R))}{\K_{\ac}(\Mod (\mmod R))}, Q(\K_{R\mbox{-} \ac}(\Mod (\mmod R))^\perp) ) \ \ \ \ $$
in $\frac{\K(\Mod (\mmod R))}{\K_{\ac}(\Mod (\mmod R))}$, where $Q: \K(\Mod (\mmod R)) \lrt \frac{\K(\Mod (\mmod R))}{\K_{\ac}(\Mod (\mmod R))}$ is the canonical functor.
Set $ \ \CU:= \Psi(Q({}^\perp \K_{R\mbox{-} \ac}(\Mod (\mmod R))))$, $\ \  \CV:= \Psi( \frac{\K_{R\mbox{-} \ac}(\Mod (\mmod R))}{\K_{\ac}(\Mod (\mmod R))}) \ $ \ and \ \ $ \ \ \ \CW:= \Psi (Q(\K_{R\mbox{-} \ac}(\Mod (\mmod R))^\perp))$. Since $\Psi$ is an equivalence, we have stable $t$-structures $(\CU, \CV)$ and $(\CV, \CW)$ in $\K({\rm P}\Inj R)$.
By definition, the functor $\Psi$ sends every complex $\X$ with the property that $\X(R)$ is acyclic to an acyclic complex of pure-injective $R$-modules. So, the equivalence
$$\Psi: \D(\Mod (\mmod R))\lrt \K({\rm P}\Inj R)$$
induces an equivalence $\Psi: \frac{\K_{R\mbox{-} \ac}(\Mod (\mmod R))}{\K_{\ac}(\Mod (\mmod R))} \lrt \K_{\ac}({\rm P}\Inj R)$ and so $$\Psi(\frac{\K_{R\mbox{-} \ac}(\Mod (\mmod R))}{\K_{\ac}(\Mod (\mmod R))}) = \K_{\ac}({\rm P}\Inj R).$$
Now, by Corollary 1.13 of \cite{IKM}, we have the desired commutative diagram of recollements.

For the second part, note that by Theorem \ref{Thm1}, $\D_{R}(\Mod(\mmod R)) \simeq \D(\Mod R)$ as triangulated categories. Thus, $\frac{\K({\rm P}\Inj R)}{\K_{\ac}({\rm P}\Inj R)} \simeq \D(\Mod R)$ and we have the desired recollement.

The same argument works to prove $(ii)$.
\end{proof}
As a direct consequence of the above theorem we have the following results.
\begin{corollary}
For a right coherent ring $R$, there is an equivalence
$$\K_{\ac}({\rm P}\Inj R) \simeq \K_{\ac}({\rm P}\Prj R)$$
of triangulated categories.
\end{corollary}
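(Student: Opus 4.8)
The plan is to read the asserted equivalence off directly from Theorem \ref{RecDer}, so that the argument reduces to a single composition; the one genuinely useful observation is that both parts of that theorem produce, as their left-hand vertical arrows, triangle equivalences emanating from the \emph{same} intermediate category $\D_0(\Mod(\mmod R))$.

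First I would invoke Theorem \ref{RecDer}$(i)$: the equivalence $\Psi$ restricts to a triangle equivalence
$$\Psi|: \D_0(\Mod (\mmod R)) \lrt \K_{\ac}({\rm P}\Inj R).$$
This is precisely the left vertical arrow of the commutative diagram of recollements in that part, and its being a triangle equivalence is part of the stated conclusion. Here one recalls that $\D_0(\Mod(\mmod R)) = \K_{R\mbox{-} \ac}(\Mod(\mmod R))/\K_{\ac}(\Mod(\mmod R))$, and that $\Psi$ carries every complex $\X$ with $\X(R)$ acyclic onto an acyclic complex of pure-injectives, which is exactly what forces this restriction to be well defined and essentially surjective.

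Next I would apply Theorem \ref{RecDer}$(ii)$ in the identical way to extract the triangle equivalence
$$\Phi|: \D_0(\Mod (\mmod R)) \lrt \K_{\ac}({\rm P}\Prj R).$$
Since $\Psi|$ and $\Phi|$ share the common source $\D_0(\Mod(\mmod R))$ and both are triangle equivalences, the composite
$$\Phi| \circ (\Psi|)^{-1}: \K_{\ac}({\rm P}\Inj R) \lrt \K_{\ac}({\rm P}\Prj R)$$
is the desired triangle equivalence.

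I do not expect any real obstacle at this stage: all the substantive work — namely constructing $\Psi$ and $\Phi$ via Proposition \ref{isos} and verifying in Theorem \ref{RecDer} that each restricts to an equivalence on the relevant acyclic subcategory — has already been done. The corollary simply records the transitivity of these two equivalences through their shared middle term $\D_0(\Mod(\mmod R))$, so the only thing to check is that the two restrictions indeed have the same source, which is manifest from the two diagrams.
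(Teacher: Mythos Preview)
Your proposal is correct and follows exactly the approach implicit in the paper, which states the corollary as a direct consequence of Theorem \ref{RecDer} without further proof. You have simply made explicit the composition $\Phi| \circ (\Psi|)^{-1}$ through the common source $\D_0(\Mod(\mmod R))$, which is precisely what the paper intends.
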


We need the following lemma for the proof of the next result.

\begin{lemma}\label{Murfet}\cite[Corolary  2.10]{Mu}
Let  there is the following recollement of triangulated categories with $\CT$ compactly generated
\vspace{0.2cm}
\[\xymatrix{\CT'\ar[rr] && \CT \ar[rr] \ar@/^1pc/[ll]\ar@/_1pc/[ll]&& \CT'' \ar@/^1pc/[ll] \ar@/_1pc/[ll] }\]
\vspace{0.1cm}
\\Then $\CT'$ is compactly generated, and if $\CT''$ is also compactly generated, then there is a triangle
equivalence up to direct summands $\frac{\CT^{\rm c}}{\CT''^{\rm c}} \st{\sim }\lrt \CT'^{\rm c}$.
\end{lemma}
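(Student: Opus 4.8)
The plan is to extract from the recollement a second localization sequence and then feed it into Neeman's localization theorem. Label the six functors $i_*,i^!,i^*,j^*,j_*,j_!$ as in the definition of a triangulated recollement, so that
\[\xymatrix{\CT'\ar[rr]^{i_*}  && \CT \ar[rr]^{j^*} \ar@/^1pc/[ll]_{i^!} \ar@/_1pc/[ll]_{i^*} && \CT'' \ar@/^1pc/[ll]_{j_*} \ar@/_1pc/[ll]_{j_!} }\]
has adjoint pairs $(i^*,i_*)$, $(i_*,i^!)$, $(j_!,j^*)$ and $(j^*,j_*)$. I would first observe that $j_!$ is fully faithful with essential image $\im j_!=\Ker i^*$, and that $i^*$, being left adjoint to the fully faithful $i_*$ (so $i^*i_*\cong \id$), exhibits $\CT'$ as the Verdier quotient $\CT/\Ker i^*=\CT/\im j_!$. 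Hence the recollement encodes a localization sequence
\[\CT'' \st{j_!}\lrt \CT \st{i^*}\lrt \CT',\]
identifying $\CT''$ with the localizing subcategory $\im j_!$ of $\CT$ and $\CT'$ with $\CT/\im j_!$.

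To prove that $\CT'$ is compactly generated I would use that $i^*$ preserves compactness: its right adjoint $i_*$ itself has a right adjoint $i^!$, hence $i_*$ preserves coproducts, so the left adjoint $i^*$ sends compact objects to compact objects. If $\{G_\lambda\}$ is a set of compact generators of $\CT$, then each $i^*G_\lambda$ is compact in $\CT'$; and if $X\in\CT'$ satisfies $\Hom_{\CT'}(i^*G_\lambda[n],X)=0$ for all $\lambda,n$, then by adjunction $\Hom_{\CT}(G_\lambda[n],i_*X)=0$ for all $\lambda,n$, forcing $i_*X=0$ and hence $X=0$ by full faithfulness of $i_*$. Thus $\{i^*G_\lambda\}$ is a set of compact generators of $\CT'$, which proves the first assertion, using only that $\CT$ is compactly generated.

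Assuming in addition that $\CT''$ is compactly generated, I would verify the hypotheses of Neeman's localization theorem for the sequence above. Dually to the previous step, $j_!$ preserves compactness, since its right adjoint $j^*$ has the further right adjoint $j_*$ and hence preserves coproducts. So if $\{H_\mu\}$ compactly generate $\CT''$, the objects $j_!H_\mu$ are compact in $\CT$ and generate the localizing subcategory $\im j_!$, because $j_!$ is a coproduct-preserving triangle equivalence onto $\im j_!$. Neeman's localization theorem, in the form accounting for idempotent completions, then gives that the induced functor
\[\frac{\CT^{\rm c}}{(\im j_!)^{\rm c}} \lrt \CT'^{\rm c}\]
is fully faithful with every object of $\CT'^{\rm c}$ a direct summand of one in its image, i.e.\ an equivalence up to direct summands. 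Since $j_!$ restricts to an equivalence $\CT''^{\rm c}\st{\sim}\lrt(\im j_!)^{\rm c}$, this is exactly the asserted equivalence $\CT^{\rm c}/\CT''^{\rm c}\lrt\CT'^{\rm c}$.

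The main obstacle is this last step. Passing from a Verdier localization of the ambient compactly generated category to a statement purely about compact objects is the substantive content of the Neeman--Thomason localization theorem, and the phrase \emph{up to direct summands} cannot be dropped: the quotient $\CT^{\rm c}/\CT''^{\rm c}$ is in general not idempotent-complete, whereas $\CT'^{\rm c}$ is, so the best one can hope for is that $\CT'^{\rm c}$ is the idempotent completion of the quotient. The precise point at which compact generation of $\CT''$ is used is in checking that $\im j_!$ is generated \emph{as a localizing subcategory of $\CT$} by objects compact \emph{in} $\CT$, which is exactly the hypothesis making the theorem applicable.
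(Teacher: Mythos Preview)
The paper does not prove this lemma at all; it is quoted verbatim from Murfet's thesis \cite[Corollary~2.10]{Mu} and used as a black box in the proof of Corollary~\ref{ComObj}. So there is no ``paper's own proof'' to compare against.

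Your argument is correct and is exactly the standard route (and the one Murfet himself follows): from the recollement extract the localization sequence $\CT''\st{j_!}\lrt\CT\st{i^*}\lrt\CT'$, observe that both $i^*$ and $j_!$ preserve compactness because their right adjoints have further right adjoints, deduce compact generation of $\CT'$ from that of $\CT$, and then invoke the Neeman--Thomason localization theorem once $\CT''$ is compactly generated. Your identification of the key hypothesis---that $\im j_!$ be generated as a localizing subcategory by objects compact in $\CT$---and of why ``up to direct summands'' is needed is accurate.
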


\begin{corollary}\label{ComObj}
Let $R$ be a right coherent ring. Then the homotopy category $\K_{\ac}({\rm P}\Inj R)$ is compactly generated and there exists the following triangle equivalence up to direct summands
\[ \frac{\K^{\bb}(\mmod R)}{\K^{\bb}(\prj R)}\st{\sim} \lrt \K^{\rm c}_{\ac}({\rm P}\Inj R).\]
\end{corollary}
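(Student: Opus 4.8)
The plan is to obtain both assertions directly from the recollement of Theorem \ref{RecDer}(i), namely
$\K_{\ac}({\rm P}\Inj R)\lrt\K({\rm P}\Inj R)\st{j^*}\lrt\D(\Mod R)$,
by feeding it into the localization theorem of Lemma \ref{Murfet}. Writing $\CT'=\K_{\ac}({\rm P}\Inj R)$, $\CT=\K({\rm P}\Inj R)$ and $\CT''=\D(\Mod R)$, I first have to check that the middle and right terms are compactly generated. For the middle term, the equivalence $\Psi$ of Proposition \ref{isos}(i) identifies $\K({\rm P}\Inj R)$ with $\D(\Mod(\mmod R))$, which is in turn equivalent to $\D_{\rm pur}(\Mod R)$; by Krause \cite{K12} the latter is compactly generated with $\D(\Mod(\mmod R))^{\rm c}\simeq\K^{\bb}(\mmod R)$, the identification matching a bounded complex $N^{\bullet}$ over $\mmod R$ with the complex $(-,N^{\bullet})$ of representable functors. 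The right term $\D(\Mod R)$ is compactly generated by $R$, with $\D(\Mod R)^{\rm c}\simeq\K^{\bb}(\prj R)$ the category of perfect complexes.

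Granting this, Lemma \ref{Murfet} at once gives that $\CT'=\K_{\ac}({\rm P}\Inj R)$ is compactly generated and produces a triangle equivalence up to direct summands $\CT^{\rm c}/\CT''^{\rm c}\st{\sim}\lrt\CT'^{\rm c}$. Substituting the two computations above rewrites the source as $\K^{\bb}(\mmod R)/\K^{\bb}(\prj R)$, so the proof reduces to checking that the copy of $\CT''^{\rm c}$ that is quotiented out is precisely the image of the standard inclusion $\K^{\bb}(\prj R)\hookrightarrow\K^{\bb}(\mmod R)$. This is the main obstacle, because the embedding $\D(\Mod R)^{\rm c}\hookrightarrow\K({\rm P}\Inj R)^{\rm c}$ supplied by the localization theorem is the abstract one induced by the fully faithful left adjoint $j_!$ of $j^*$ (which does land in the compacts, since $j^*$ has the further right adjoint $j_*$ and hence preserves coproducts), and a priori it bears no visible relation to the inclusion of module categories.

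To remove this obstacle I would transport $j_!$ through $\Psi$ and through Theorem \ref{Thm1}, so that it becomes the left adjoint of the Verdier quotient $Q\colon\D(\Mod(\mmod R))\lrt\D_{R}(\Mod(\mmod R))$, and then evaluate it on generators. The key computation is that for $P\in\prj R$ the representable functor $(-,P)$ lies in the left orthogonal ${}^\perp\D_0(\Mod(\mmod R))=\im j_!$: for every $Z\in\D_0(\Mod(\mmod R))$, Yoneda gives a natural isomorphism $\Hom_{\D(\Mod(\mmod R))}((-,P),Z[n])\cong H^n(Z(P))$, and this vanishes because $Z(R)$, and hence $Z(P)$ (as $P$ is a summand of some $R^m$), is acyclic. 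Since $Q((-,P))\cong P$ under the equivalence of Theorem \ref{Thm1}, fully faithfulness of $j_!$ forces $j_!(P)\cong(-,P)$, and therefore $j_!$ carries a perfect complex $P^{\bullet}$ to $(-,P^{\bullet})$. Under $\D(\Mod(\mmod R))^{\rm c}\simeq\K^{\bb}(\mmod R)$ this is exactly the inclusion $\K^{\bb}(\prj R)\hookrightarrow\K^{\bb}(\mmod R)$; hence $\CT^{\rm c}/\CT''^{\rm c}\simeq\K^{\bb}(\mmod R)/\K^{\bb}(\prj R)$, and the desired triangle equivalence $\K^{\bb}(\mmod R)/\K^{\bb}(\prj R)\st{\sim}\lrt\K^{\rm c}_{\ac}({\rm P}\Inj R)$ up to direct summands follows.
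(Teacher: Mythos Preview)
Your proof is correct and follows essentially the same approach as the paper: both feed the recollement of Theorem~\ref{RecDer}(i) into Lemma~\ref{Murfet}, after identifying $\K({\rm P}\Inj R)^{\rm c}\simeq\K^{\bb}(\mmod R)$ via $\Psi$ and $\D(\Mod R)^{\rm c}\simeq\K^{\bb}(\prj R)$. The one substantive difference is that the paper simply writes down the quotient $\K^{\bb}(\mmod R)/\K^{\bb}(\prj R)$ without comment, whereas you explicitly verify that the embedding $j_!\vert_{\D(\Mod R)^{\rm c}}$ corresponds, under the identifications, to the standard inclusion $\K^{\bb}(\prj R)\hookrightarrow\K^{\bb}(\mmod R)$; your argument via the left orthogonal ${}^\perp\D_0(\Mod(\mmod R))$ is a clean way to see this, and it fills a point the paper leaves implicit. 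A minor variation: you identify the compacts of the middle term by passing through $\D_{\rm pur}(\Mod R)$ and citing \cite{K12}, while the paper argues directly that $\D(\Mod(\mmod R))^{\rm c}\simeq\K^{\bb}(\prj(\Mod(\mmod R)))\simeq\K^{\bb}(\mmod R)$ via the ``ring with several objects'' viewpoint and Yoneda---but these are equivalent descriptions.
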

\begin{proof}
By Proposition \ref{isos}, there is an equivalence $\Psi: \D(\Mod (\mmod R)) \lrt \K({\rm P}\Inj R)$. View $\mmod R$, as a ring with several objects. The same argument as in the ring case, implies that $\D(\Mod (\mmod R))$ is compactly generated and $$\D(\Mod (\mmod R))^{\rm c} \simeq \K^{\bb}(\prj (\Mod (\mmod R))),$$ where $\prj (\Mod (\mmod R))$ denotes the full subcategory of $\Mod (\mmod R)$ consisting of finitely generated projective functors. Also, the Yoneda functor yields the equivalence $$\K^{\bb}(\prj (\Mod (\mmod R))) \simeq \K^{\bb}(\mmod R)$$ of triangulated categories.
Since $\Psi$ preserves direct sums, $\K({\rm P}\Inj R)$ is also compactly generated and $\K^{\rm c}({\rm P}\Inj R) \simeq \K^{\bb}(\mmod R)$.

Moreover, Theorem \ref{RecDer} gives the following recollement of triangulated categories
\vspace{0.3cm}
\[ \xymatrix@C=0.5cm@R=0.5cm{ \K_{\ac}({\rm P}\Inj R)\ar[rrr]  &&& \K({\rm P}\Inj R)  \ar[rrr] \ar@/^1.5pc/[lll] \ar@/_1.5pc/[lll] &&& \D(\Mod R). \ar@/^1.5pc/[lll] \ar@/_1.5pc/[lll] }\]
\vspace{0.2cm}
\\It is known that  $\D(\Mod R)$ is compactly generated and $\D^{\rm c}(\Mod R) \simeq \K^{\bb}(\prj R)$. Thus, we can apply Lemma \ref{Murfet} to get that $\K_{\ac}({\rm P}\Inj R)$ is compactly generated and there is a triangle equivalence up to direct summands
\[ \frac{\K^{\bb}(\mmod R)}{\K^{\bb}(\prj R)}\st{\sim} \lrt \K^{\rm c}_{\ac}({\rm P}\Inj R).\]
\end{proof}

\section*{Acknowledgments}
The authors also thank the Center of Excellence for Mathematics (University of Isfahan). Part of this work is carried out in IHES, Paris,
France, when the last author were visiting there and she would like to thank the support and excellent
atmosphere of IHES. This work was partially supported by a grant from the Simons Foundation.

\end{document}